\pgfplotsset{compat=newest}
\pgfplotsset{plot coordinates/math parser=false}
\newlength\figureheight
\newlength\figurewidth 
\newcommand{\e}{\ensuremath{{\,\rm {e}}}}
\renewcommand{\atop}[2]{\genfrac{}{}{0pt}{}{#1}{#2}}
\newcommand{\dx}[1][x]{\,\mathrm{d}#1}
\def\argmin{\mathop{\rm argmin}}
\newcommand{\R}{\ensuremath{\mathbb{R}}}
\newcommand{\C}{\ensuremath{\mathbb{C}}}
\renewcommand{\atop}[2]{\genfrac{}{}{0pt}{}{#1}{#2}}
\newcommand{\zb}[1]{\ensuremath{\boldsymbol{#1}}}
\newcommand{\imag}{\mathrm{i}}
\def\argmin{\mathop{\rm argmin}}
\newcommand{\tT}{{\scriptscriptstyle \operatorname{T}}}
\DeclareMathOperator{\diag}{diag} 
\DeclareMathOperator{\supp}{supp}
\DeclareMathOperator{\lev}{lev}
\newtheorem{theorem}{Theorem}
\newtheorem{proposition}[theorem]{Proposition}
\newtheorem{lemma}[theorem]{Lemma}
\newtheorem{remark}[theorem]{Remark}
\def\mirr{{\rm mirr}}
\def\zero{{\rm zero}}
\def\per{{\rm per}}
\def\dd{{\rm d}}
\def\prox{{\rm prox}}
\def\diag{{\rm diag}}
\newenvironment{customlegend}[1][]{%
	\begingroup
	\csname pgfplots@init@cleared@structures\endcsname
	\pgfplotsset{#1}%
}{%
\csname pgfplots@createlegend\endcsname
\endgroup
}%
\def\addlegendimage{\csname pgfplots@addlegendimage\endcsname}
\begin{document}
\author{
Jan Henrik Fitschen, Friederike Laus and Gabriele Steidl \\
\small Department of Mathematics,
 University of Kaiserslautern, Germany \\ \small
  \{fitschen, friederike.laus, steidl\}@mathematik.uni-kl.de
	}

\title{ 
Transport between RGB Images Motivated by Dynamic Optimal Transport}
\date{\today}

\maketitle

\begin{abstract}
	\noindent
	We propose two models for the interpolation between RGB images 
	based on the dynamic optimal transport model of Benamou and Brenier~\cite{BB00}.
	While the application of dynamic optimal transport 
	and its extensions to unbalanced transform
	were examined for gray-values images in various papers, this
	is the first attempt to generalize the idea to color images.
	The nontrivial task to incorporate color into the model is tackled by considering
	RGB images as three-dimensional arrays, where the transport
	in the RGB direction is performed in a periodic way.
	Following the approach of Papadakis et al.~\cite{PPO14} for gray-value images
	we propose two discrete variational models, a constrained and a penalized one
	which can also handle unbalanced transport.
	We show that a minimizer of our discrete model exists, but it is not unique for some special initial/final images.
	For minimizing the resulting functionals we apply a primal-dual algorithm. 
	One step of this algorithm requires the solution of a four-dimensional discretized Poisson equation 
	with various boundary conditions in each dimension.
	For instance, for the penalized approach we have
	simultaneously zero, mirror and periodic boundary conditions. 
	The solution can be computed efficiently
	using fast Sin-I, Cos-II and Fourier transforms.
	Numerical examples demonstrate the meaningfulness of our model. 
\end{abstract}
\section{Introduction} \label{sec:intro}
%
%
\begin{figure*}	
\centering
	{\includegraphics[width=.4\textwidth]{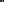}}\hspace{0.8cm}
		{\includegraphics[width=.45\textwidth]{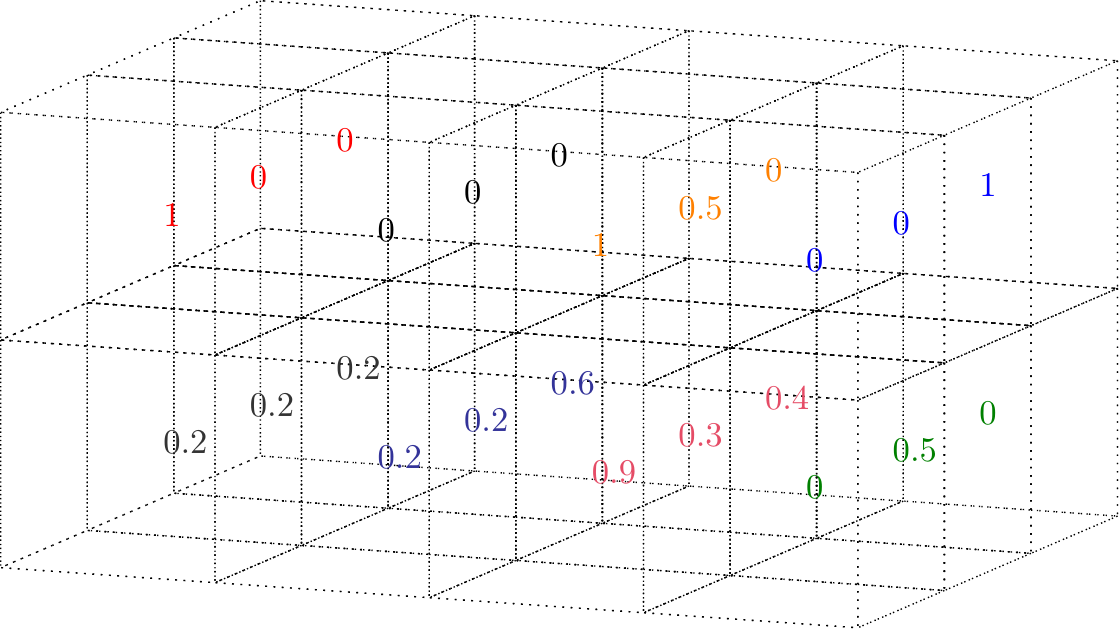}}
	\caption{
	\label{Fig:illu}
		Left: RGB image of size $4 \times 2$. Right: Image as three-dimensional $4 \times 2 \times 3$ array.
		The ``mass'' is the sum of the RGB values of all pixels.
		}		
\end{figure*}
Color image processing is much more challenging compared to gray-value image processing and usually, approaches 
for gray-value images do not generalize in a straightforward way to color images. For example, one-dimensional 
histograms are a very simple, but powerful tool in gray-value image processing, while it is in general difficult to exploit
histograms of color images. In particular, there exist several possibilities to represent color images \cite{Ber14}. 
In this paper we consider the interpolation  between two color images in the RGB space (see Figure~\ref{Fig:curves} (left)) motivated 
by the fluid mechanics formulation of dynamic optimal transport
by Benamou and Brenier~\cite{BB00} and the recent approaches of Papadakis et al.~\cite{PPO14} and \cite{MRSS14} for gray-value images.
In these works  gray-value images are interpreted as
two-dimensional, finitely supported density functions $f_0$ and $f_1$ of  absolutely continuous 
probability measures $\mu_0$ and $\mu_1 $ (i.e. $\mu_i(A)=\int_A f_i \dx$, $i=0,1$).  
In particular, we have $\int_{\mathbb R^2} f_0 \dx = \int_{\mathbb R^2} f_1 \dx = 1$.
Therewith, intermediate images are obtained as the densities $f_t$ 
of the geodesic path $\mathrm{d}\mu_t=f_t \dx$ with respect to the Wasserstein distance between $\mu_0$ and $\mu_1$. 
\\
In this paper, we  extend the transport model to  discrete RGB color images.
The incorporation of color into the above approach appears to be a non trivial task and 
this paper is a first proposal in this direction. 
We consider $N_1 \times N_2$ RGB images as three-dimensional arrays in $\mathbb R^{N_1,N_2,3}$, 
where the third direction is the ``RGB direction'' that contains the color information;
for an illustration see Figure \ref{Fig:illu}.
Particular attention has to be paid to this direction and its boundary conditions.
We propose to use periodic boundary conditions, which is motivated as follows:
assume we are given two color pixels $f_0$ and $f_1\in \R^{1\times 1\times 3}$. 
Using mirror (Neumann) boundary conditions in the third dimension, 
the transport of, e.g., a red pixel  $f_0 = (1,0,0)$ into a blue one 
$f_1 = (0,0,1)$ goes over $(0,1,0)$ (green), see Figure~\ref{Fig:curves} (middle), which is not intuitive from the viewpoint of human color perception. 
Furthermore, it implies that the transport depends on the order of the three color channels, which is clearly not desirable. As a remedy, we suggest to use 
periodic boundary conditions in the color dimension. In case of a red and a blue pixel, it yields a transport via violet,
which is also what one would expect, compare Figure~\ref{Fig:curves} (right) and see also Figures~\ref{fig:boundary} and \ref{Fig:boundary_echt}.
\\
\begin{figure*}	
	{\includegraphics[width=.33\textwidth]{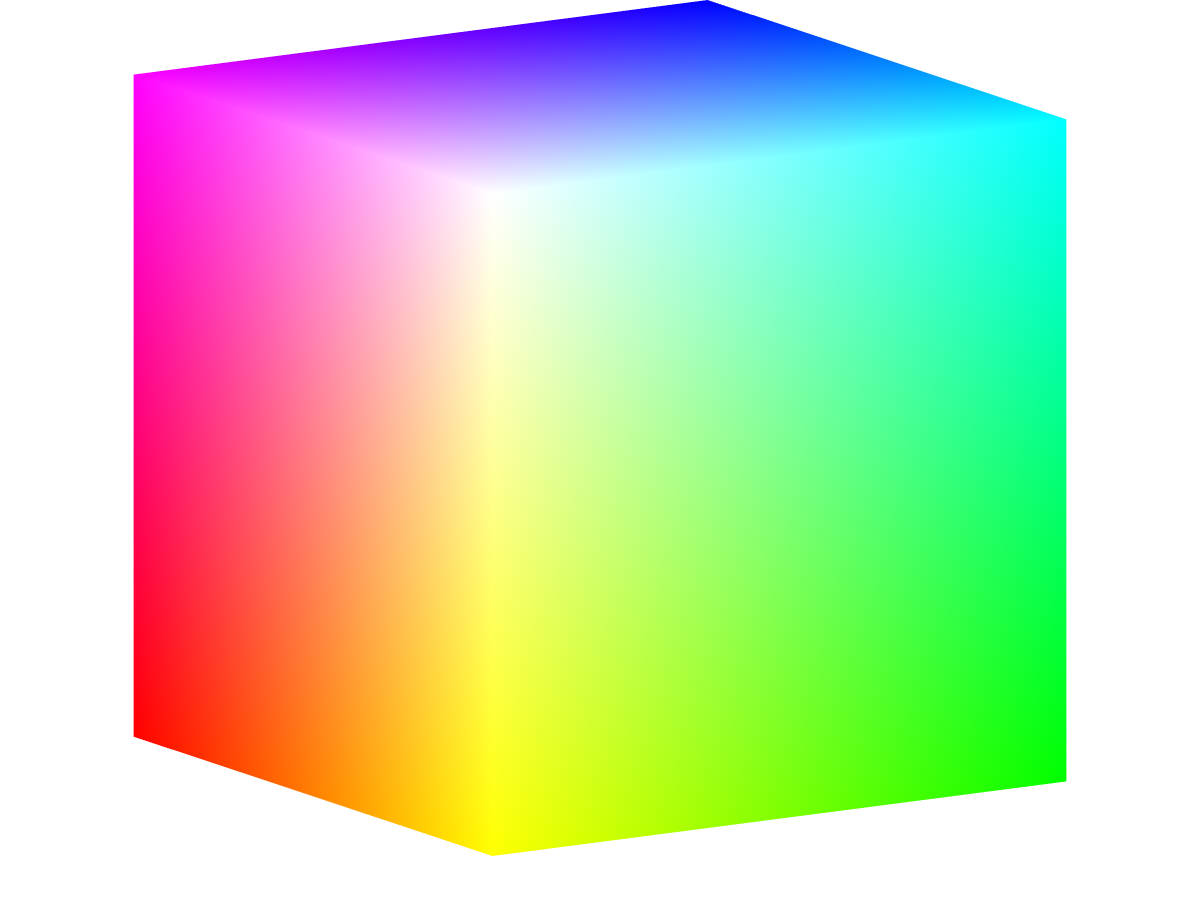}}
	{\includegraphics[width=.3\textwidth]{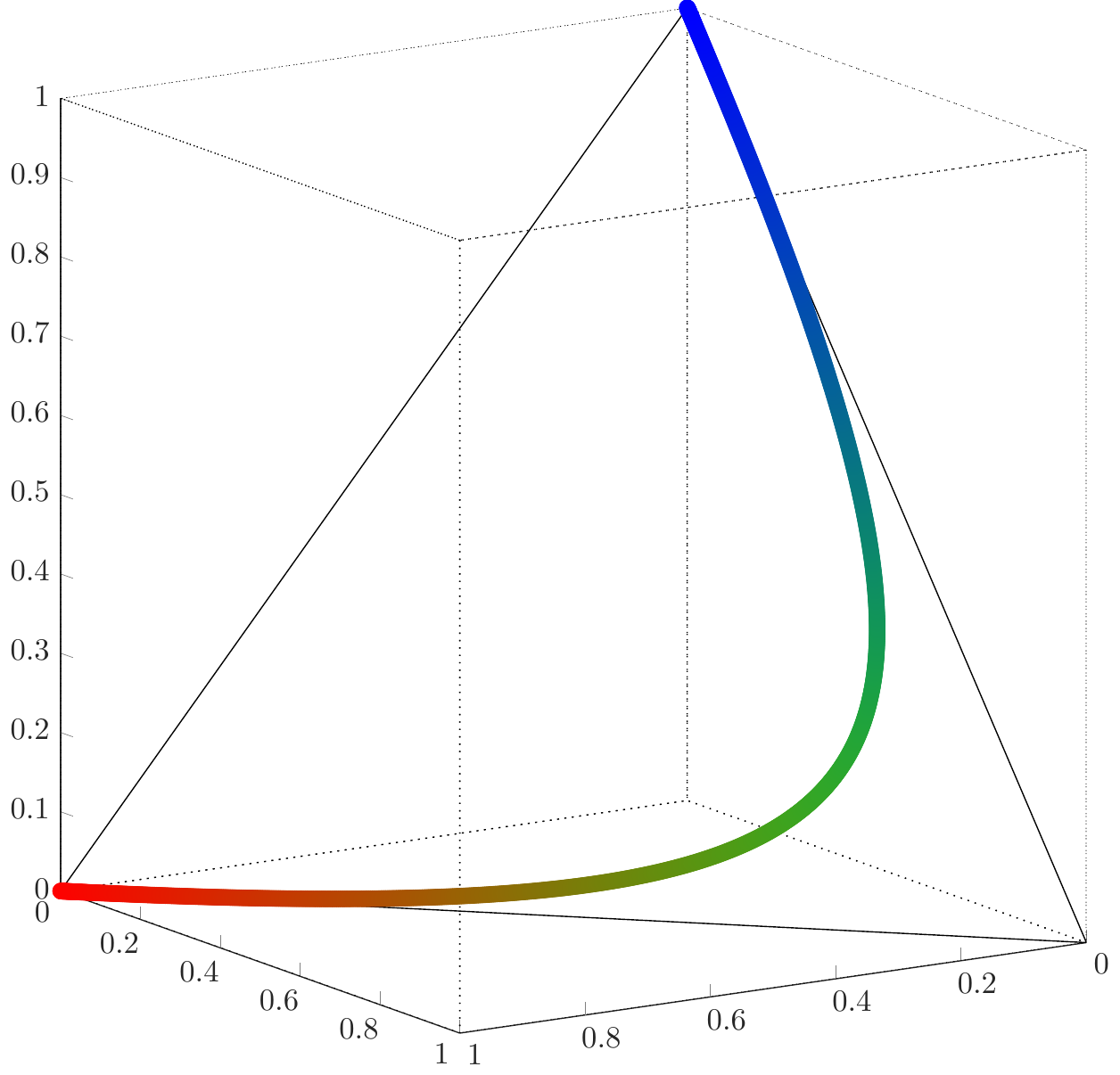}}
	{\includegraphics[width=.3\textwidth]{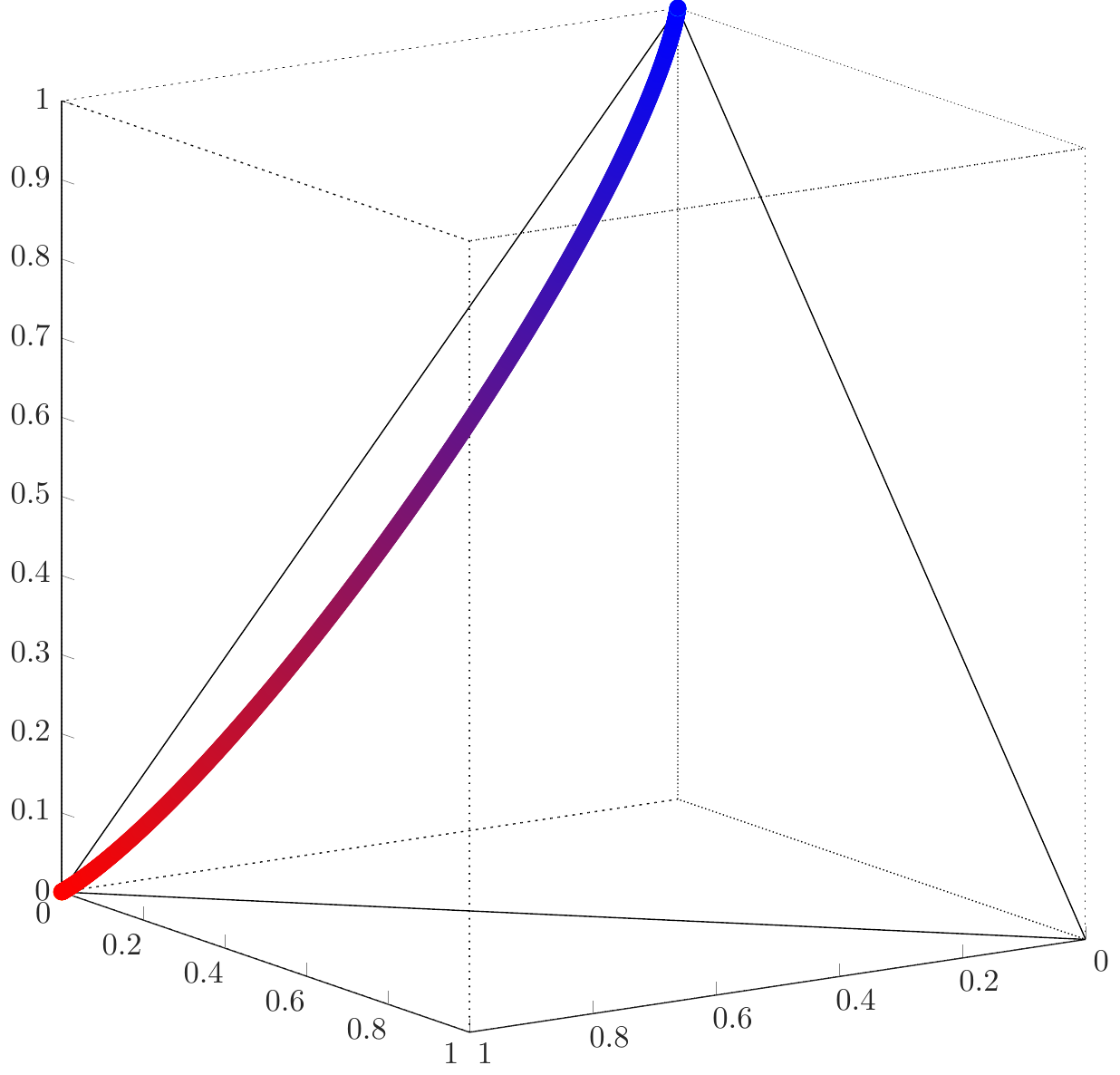}}
	\caption{\label{Fig:curves}
		Left: RGB color cube. Middle/Right:
		Color transfer between $(1,0,0)$ (red) and $(0,0,1)$ (blue) for mirror (middle) and periodic (right) boundary conditions visualized in the RGB cube.}		
\end{figure*}
In the following, we propose two variational models for the transport of color images. To handle also the case of unequal masses the mass conservation constraint is relaxed. 
Our first model contains the continuity equation as a constraint.
It turns out that it generalizes the technique which was proposed in~\cite{PPO14} for gray-value images.
The second model  penalizes the continuity equation, similar as it was also considered for (continuous) gray-value images
in~\cite{MRSS14}.
Other approaches to unbalanced optimal transport can e.g. be found in \cite{Ben03,CSPV15,FPPA14,FZMAP15}.\\
The interpolation based on an optimal transport model for a special class of images, namely so called microtextures,  has been addressed 
by Rabin et al. in \cite{PFR12,RPDB12}.
The authors show that microtextures can be well modeled  as a realization of a Gaussian random field.
In this case, theoretical results guarantee that the intermediate measures $\mu_t$ are Gaussian as well and they can be stated explicitly in terms of the means and covariance matrices learnt from the given images $f_0$ and $f_1$.
The idea can be generalized for interpolating between more than two microtextures by
using barycentric coordinates.
The approach fails for some special microtexture settings which usually do not appear in practice.
\\
Color interpolation between images of the same shape can be realized by switching to the
HSV or HSI space. Then only the hue component has to be transferred using, e.g. by a dynamic extension of the model for the transfer of cyclic measures
(periodic histograms) of Delon et al.~\cite{Del04,RDG11}. For an example we refer to \cite{FLS15}.
This idea is closely related to the affine model for color image enhancement in \cite{NS14a}.
However, these approaches transfer only the color and leave the original edge structure of the image untouched.\\
Finally we mention that the interpolation between images can also be tackled by 
other sophisticated techniques such as metamorphoses, see \cite{TJ05}.
These approach\-es are beyond the scope of this paper.
A combination of the optimal dynamic transport model with the metamorphosis approach was proposed in~\cite{MRSS14}.
\\

The outline of our paper is as follows: 
in Section~\ref{sec:model_cont} we recall basic results from the theory of optimal transport.
At this point, we deal with general $p \in (1,2]$ instead of just $p=2$ as in~\cite{PPO14}.
We propose  discrete dynamic transport models in Section~\ref{sec:model_discrete}.
Here we prefer to give a matrix-vector notation of the problem to make it more intuitive from an linear algebra point of view.
We prove the existence of a minimizer and show that there are special settings where the minimizer is not unique.
In Section \ref{sec:alg} we solve the resulting minimization problems by primal-dual minimization algorithms.
It turns out that one step of the algorithm requires the solution of a 
four-dimensional Poisson equation which
includes various boundary conditions and can be handled by
fast trigonometric transforms.
Another step involves to find the positive root of a polynomial of degree $2q-1$, where 
$\frac1p + \frac1q =1$ and $p \in (1,2]$. For this task we propose to use Newton's algorithm and determine an appropriate starting point to ensure its quadratic convergence. 
Section \ref{sec:numerics} shows numerical results, some of which were also reported at the SampTA conference 2015 \cite{FLS15}. More examples can be found on our website { \small
	\url{http://www.mathematik.uni-kl.de/imagepro/members/laus/color-OT}}.
Finally, Section \ref{sec:conclusions} contains conclusions and ideas for future work.
In particular, additional priors may be used to improve the dynamic transport, e.g., a total variation prior to avoid smearing effects.
The Appendix reviews the diagonalization of certain discrete Laplace operators, and provides basic rules for tensor product computations.
Further it contains some technical proofs.
%
\section{Dynamic Optimal Transport} \label{sec:model_cont}
In this section we briefly review some basic facts on the theory of optimal transport.
For further details we refer to, e.g.~\cite{AGS08,San15,Vil08}.\\
Let $\mathcal{P}(\R^d)$ be the space of probability measures on $\R^d$ and 
$\mathcal{P}_p(\R^d)$, $p \in [1,\infty)$ the Wasserstein space of measures having finite $p$-th moments
$$
{\cal P}_p(\mathbb R^{d}) \vcentcolon= \left\{ \mu \in {\cal P}(\mathbb R^{d}): 
\int_{\mathbb R^{d}} |x|^p d \mu(x) < +\infty\right\}.
$$ 
For $\mu_0, \mu_1\in \mathcal{P}(\R^d)$ let $\Pi(\mu_0,\mu_1)$ be the set of all probability measures on $\R^d\times \R^d$ 
whose marginals are equal to $\mu_0$ and $\mu_1$. 
Therewith, the optimal transport problem (Kantorovich problem) reads as 
$$
\argmin_{\pi\in \Pi(\mu_0,\mu_1) } \int_{\mathbb R^{d}} |x-y|^p \dx[\pi(x,y)].
$$
One can show that for $p \in [1,\infty)$ a minimizer exists, which is uniquely determined for $p>1$ and also called optimal transport plan. 
In the special case of the one-dimensional optimal transport problem, if the measure $\mu_0$ is non-atomic, the optimal transport plan is the same for all $p \in (1,\infty)$ and can be stated explicitly in terms of the cumulative density functions of the involved measures. 
The minimal value
$$
W_p(\mu_0,\mu_1) := \left(\min_{\pi\in \Pi(\mu_0,\mu_1) } \int_{\mathbb R^{d}} |x-y|^p \dx[\pi(x,y)]\right)^{\frac{1}{p}}
$$
defines a distance on $\mathcal{P}_p(\R^d)$, the so-called 
\emph{Wasserstein distance}. \\ 
Wasserstein spaces $\left({\cal P}_p(\R^d), W_p(\R^d)\right)$ are geodesic spa\-ces.
In particular, there exists for any $\mu_0,\mu_1 \in {\cal P}_p(\mathbb R^{d})$ a geodesic
$\gamma\colon [0,1] \rightarrow  {\cal P}_p(\mathbb R^{d})$
with $\gamma(0) = \mu_0$ and $\gamma(1) = \mu_1$.	
For interpolating our images we ask for $\mu_t = \gamma(t)$, $t \in [0,1]$.\\
At least theoretically there are several ways to compute $\mu_t$.
If  the optimal transport plan $\pi$ is known, then 
$\mu_t = {\cal L}_t {}_\# \pi \vcentcolon= \pi \circ {\cal L}_t^{-1}$ yields the geodesic path,
where ${\cal L}_t\colon \mathbb R^{d} \times \mathbb R^{d} \rightarrow \mathbb R^{d}$, 
${\cal L}_t(x,y) = (1-t)x + ty$ is the linear interpolation map, see further~\cite{San15}.
This requires the knowledge of the optimal transport plan $\pi$ and of  ${\cal L}_t^{-1}$.
At the moment there are efficient ways for computing the optimal transport plan 
$\pi$ only in special cases, 
in particular in the one-dimensional case by an ordering procedure and for 
Gaussian distributions in the case 
$p=2$ using  expectation and covariance matrix. 
For $p=2$ one can also use the fact that $\pi$ is indeed induced 
by a transport map $T\colon \mathbb R^d \rightarrow \mathbb R^d$, i.e.,
$\pi = ({\rm id}, T)_{\#}\mu_0$, which can be written as $T = \nabla \psi$, where
$\psi$ fulfills the Monge-Amp\`{e}re equation, see \cite{Caf90a}.
However, this second order nonlinear elliptic PDE
is numerically hard to solve and so far, only some special cases were considered
\cite{Ben95,Cu89,CP84,KO97}.
Other numerical techniques to compute optimal transport plans have been proposed, e.g., 
in \cite{AHT03,HRT10,SS13a,Sch15}.
Another approach consists in relaxing the condition of minimizing a Wasserstein distance by 
using instead an entropy regularized Wasserstein distance. 
Such distances can be computed more efficiently by the Sinkhorn algorithm
and were applied within a barycentric approach by Cuturi et al. \cite{Cu13,CD14}.
\\

The approach in this paper was inspired by the one of Benamou and Brenier in~\cite{BB00}.
It involves the velocity field $\mathrm{v}\colon [0,1]\times \R^d\to \R^d$ of the geodesic curve joining $\mu_0$ and $\mu_1$.
This velocity field ${\rm v}(t, \cdot)$ 
has constant speed
$\|{\rm v}(t, \cdot)\|_{L^p(\mu_t)} = W_p(\mu_0,\mu_1)$.
It can be shown that it minimizes the energy functional 
\begin{align} \label{energy_1}
{\cal E}_p({\rm v},\mu) \vcentcolon= \int_{0}^{1}  \int_{\mathbb R^{d}} \frac1p |{\rm v}(x,t)|^p \dx[\mu_t(x)]\dx[t]
\end{align}
and fulfills the continuity equation
\begin{align}
\partial_t \mu_t + \nabla_x \cdot (\mu_t {\rm v}(t, \cdot)) = 0,
\end{align}
where we say that $t \mapsto \mu_t$ is a measure-valued solution of the continuity equation if for all compactly supported
$\phi\in C^{1}\bigl((0,1)\times \mathbb R^d\bigr)$ and $T\in (0,1)$ the relation
\begin{equation*}
\int_{0}^{T} \int_{\mathbb R^d} \partial_t \phi(x,t) + \langle {\rm v}(x,t),\nabla_x \phi(x,t)\rangle  \dx[\mu_t(x)]\dx[t] = 0
\end{equation*}
holds true. For more details we refer to~\cite{San15}.\\
Assuming the measures $\mu_0$ and $\mu_1$ to be absolutely continuous with respect to the Lebesgue measure, i.e.\ $\mathrm{d}\mu_i = f_i \dx$, $i=0,1$, theoretical results (see for instance~\cite[Theorem 8.7]{Vil08}) guarantee that the same holds true for $\mathrm{d}\mu_t = f_t \dx$, where $f_t$ can be obtained as the minimizer over ${\rm v},f$ of	
\begin{align} \label{energy_2}
{\cal E}_p({\rm v},f)=
\int_0^1\int_{\mathbb R^d} \frac1p |{\rm v}(x,t)|^p f(x,t) \, \dx \dx[t]
\end{align}
subject to the continuity equation
\begin{align}
\partial_t f(x,t) + \nabla_x \cdot ({\rm v}(x,t) f(x,t)) = 0,\\
f(0,\cdot)=f_0, \; f(1,\cdot)=f_1,
\end{align}
where we suppose  $\cup_{t \in [0,1]}\supp f(t,\cdot) \subseteq [0,1]^d$ 
with appropriate (spatial) boundary conditions.
Unfortunately, the energy functional \eqref{energy_2} is not convex in $f$ and ${\rm v}$. 
As a remedy, Benamou and Brenier suggested in the case $p=2$ a change of variables 
$
(f,{\rm v})\mapsto (f,f{\rm v}) = (f,m).
$ 
This idea can be generalized to  $p \in (1,\infty)$, see~\cite{San15}, which results in the functional 
\begin{align} \label{energy_3}
\int_{0}^{1}\int_{\R^d} J_p\bigl(m(x,t),f(x,t)\bigl)\dx \dx[t],\\
\end{align}
where
$J_p\colon \mathbb R^d \times \mathbb R \rightarrow \mathbb R \cup \{+ \infty\}$
is defined as
\begin{align} \label{def_J_p}
J_p (x,y) \vcentcolon= 
\begin{cases}
\frac1p \frac{|x|^{p}}{y^{p-1}} &  {\rm if } \; y>0,\\
0 & {\rm if } \; (x,y)=(0,0),\\
+\infty &{\rm otherwise}
\end{cases}
\end{align}
and $|\cdot|$ denotes the Euclidean norm.
This functional has to be minimized subject to the continuity equation
\begin{align}
\partial_t f(x,t) + \nabla_x \cdot m(x,t)  = 0, \label{cont}\\
f(0,\cdot)=f_0, \; f(1,\cdot)=f_1, \label{start}
\end{align}
equipped with  appropriate spatial boundary conditions.

\begin{remark} \label{J_p}
	The function 
	$J_p\colon \mathbb R^d \times \mathbb R \rightarrow \mathbb R \cup \{+ 
	\infty\}$
	defined in \eqref{def_J_p}
	is the perspective function of $\psi(s) = \frac{1}{p} |s|^p$, i.e.,
	$J_p (x,y) = y \psi\left(\frac{x}{y} \right)$. 
	For properties of perspective functions see, e.g., {\rm \cite{DM08}}.
	In particular, since $\psi$ is convex for $p \in (1,\infty)$, 
	its perspective $J_p (x,y)$ is also convex.
	Further, $J_p (x,y)$ is lower semi-continuous and positively homogeneous,
	i.e. 
	$J_p (\lambda x, \lambda y) = \lambda J_p (x,y)$ for all $\lambda > 0$.
\end{remark}
%
\section{Discrete Transport Models} \label{sec:model_discrete}
%

In practice we are dealing with discrete images whose pixel values are given on a rectangular grid.
To get a discrete version of the minimization problem we have 
to discretize both the integration operator in \eqref{energy_3} by a quadrature rule
and the differential operators in the continuity equation~\eqref{cont}.
The discretization of the continuity equation requires the evaluation of 
discrete ``partial derivatives'' in time as well as in space. 
In order to avoid solutions suffering from the well known checker\-board-effect 
(see for instance~\cite{Pat80}) 
we adopt the idea of a staggered grid as in \cite{PPO14}, see 
Figure~\ref{Fig:staggered_grid}.
%
\begin{figure*}[htbp]
	\centering
	\begin{tikzpicture}[scale=6]\centering	
	\draw[step=0.25cm,very thin, dotted] (-0.1,-0.1) grid (1.1,1.1);
	\draw[very thick,->] (0,1) -- (1.15,1) node[below] {$t$};
	\draw[very thick,->] (0,1) -- (0,-0.15) node[left] {$x$};
	\draw (0,0) node[anchor=north east] {$1$};  
	\draw (1,1) node[above] {$1$};  
	\draw (0,1) node[left] {$0$};     
	
	\foreach \y in {0.125,0.375,0.625,0.875}	{\foreach \x in {0.125,0.375,0.625,0.875}				
		\draw[thick] (\x,\y) ++(-0.015,0)--++(0.03,0)++(-0.015,-0.015)--++(0,0.03);}		 
	
	\foreach \y in {0.25,0.5,0.75}	{ \foreach \x in {0.125,0.375,0.625,0.875}  {  
			\fill (\x,\y) ++ (-0.01,-0.01) rectangle ++ (0.02,0.02); }  } 
	
		\foreach \y in {0,1}	{ \foreach \x in {0.125,0.375,0.625,0.875}  {  
				\draw (\x,\y) ++ (-0.01,-0.01) rectangle ++ (0.02,0.02); }  } 
	
	\foreach \y in {0.125,0.375,0.625,0.875}	{ \foreach \x in {0.25,0.5,0.75}  {  
			\fill (\x,\y) circle (0.01); }  }

	\foreach \y in {0.125,0.375,0.625,0.875}
	\draw (0,\y)circle (0.01); 
	
	\foreach \y in {0.125,0.375,0.625,0.875}
	\draw (1,\y)circle (0.01); 
	
	\begin{customlegend}[legend cell align=left,
	legend entries={ 
		boundary nodes for $f_0$ and $f_1$,
		inner nodes for $f$,
		boundary nodes for $m$,
		inner nodes for $m$,
		interpolation nodes},
	legend style={at={(1.9,0.85)},font=\footnotesize}] 
	\addlegendimage{mark=*,draw=black, fill=white,only marks}
	\addlegendimage{mark=*,draw=black,only marks}
	\addlegendimage{mark=square*,draw=black,fill=white,only marks}
	\addlegendimage{mark=square*,draw=black,fill=black,only marks}
	\addlegendimage{mark=+,black,only marks}
	\end{customlegend}
	
	\end{tikzpicture}
	\caption{Staggered grid for the discretization of the 
	dynamic optimal transport problem, where $N=P=4$. For periodic boundary conditions, the boundary values  for $m$ are equal, while they are zero for Neumann boundary conditions.}\label{Fig:staggered_grid}
\end{figure*}

The differential operators in space and time are discretized by forward differences, 
and depending on the boundary conditions this results in the use of difference 
matrices of the form

{\scriptsize
\begin{align*}
D_n \vcentcolon &= n \left( 
\begin{array}{rrrrrrr}
-1&\; 1&                 \\
  &-1  & \; 1&                \\
  &    &     &\ddots  &    \\
  &    &     &        &-1&1& 0\\
  &    &     &        &  &-1& 1
\end{array} 
\right) \in \R^{n-1,n},
\\
D_n^\per \vcentcolon&= 
n \left( 
\begin{array}{rrrrr}
1&  &    &  &-1 \\
 -1&1&          \\
  & &\ddots& & \\
  & &      &1& 0\\
  & &      &-1&1
\end{array}
\right) \in \R^{n,n}.
	\end{align*}}
For the integration we apply a simple midpoint rule.
To handle this part, we introduce the averaging/inter\-polation matrices

{\scriptsize
\begin{align*}
S_n \vcentcolon &= 
\frac12 \left( 
\begin{array}{rrrrrr}
1&1&              \\
 &1&1&            \\
 & & &\ddots& &   \\
 & & &      &1&1
\end{array}
\right) \in \R^{n-1,n},
\\
S_n^{\per} \vcentcolon&= 
\frac12 \left( 
\begin{array}{rrrrrrr}
1&0& &      & & &1\\
1&1& &            \\
& 1&1& &            \\
 & & &\ddots& & &   \\
 & & &      &1&1&0\\
 & & &      & &1&1
\end{array}
\right) \in \R^{n,n}.
\end{align*}
}

\noindent
{\bf Discretization for one spatial dimension + time}:
In the following, we derive the discretization of \eqref{energy_3} for one spatial direction, i.e., for the transport of signals. 
The problem can be formulated in a simple matrix-vector form using tensor products of matrices
which makes it rather intuitive from the linear algebra point of view.
Moreover, it will be helpful for deriving the fast trigonometric transforms which will play a role within our algorithm.
The generalization of our approach to higher dimensions is straightforward and can be found in Appendix \ref{sec:appC}.
\\
We want to organize the transport between two given one-dimensional, 
nonnegative discrete signals
$$
f_0 \coloneqq \left(f_0( \tfrac{j-1/2}{N} ) \right)_{j=1}^N \quad {\rm and} 
\quad
f_1 \coloneqq \left(f_1( \tfrac{j-1/2}{N} ) \right)_{j=1}^N.
$$
We are looking for the intermediate signals 
$f_t$ for $t = \frac{k}{P}$, $k=1, \ldots, P-1$.
Using the notation $f_t(x) = f(x,t)$, we want to find
$f(\frac{j-1/2}{N},\frac{k}{P})_{j=1,k=1}^{N,P-1} \in \R^{N,P-1}$.
For $m$  we have to take the boundary conditions into account.
 In the case of mirror boundary conditions, there is no flow over the boundary 
and since $m = f\rm v$ the value of $m$ is zero at the boundary at each time.
In the periodic case  both boundaries of $m$ coincide.
The values of $m$ are taken at the cell faces 
$\frac{j}{N}$, $j = \kappa,\ldots,N-1$
and time 
$\frac{k-\tfrac12}{P}$, $k=1,\ldots,P$, i.e., we  are looking for
$\left( m(\frac{j}{N},\frac{k-\tfrac12}{P}) \right)_{j=\kappa,k=1}^{N-1,P} \in \R^{N-\kappa,P}$,
where 
$$
\kappa = 
\left\{
\begin{array} {ll}
1 & \mbox{mirror boundary},\\
0 & \mbox{periodic boundary}.
\end{array}
\right.
$$
The midpoints for the quadrature rule 
are computed by averaging the neighboring two values of $m$ and $f$, respectively.
To give a sound matrix-vector notation of the discrete minimization problem
we reorder $m$ and $f$ columnwise into vectors
${\rm vec}(f) \in \R^{N(P-1)}$ and 
${\rm vec} (m) \in \R^{(N-\kappa)P}$, which we again denote by $f$ and $m$. 
For the ${\rm vec}$ operator in connection with the tensor product $\otimes$ of matrices we refer to Appendix \ref{sec:appB}.
More specifically, let $I_n\in\R^{n,n}$ be the identity matrix and set
\begin{align}
S_{\rm f} \vcentcolon= S_P^ \tT \otimes I_N, \qquad D_{\rm f} \vcentcolon= D_P^\tT \otimes I_N,
\end{align}
\begin{align}
S_{\rm m} \vcentcolon= \left\{
\begin{array}{ll}
I_P \otimes S_N^\tT & \: \mbox{mirror boundary},\\
I_P \otimes \left( S_{N}^{\per} \right)^\tT & \: \mbox{periodic boundary},
\end{array}
\right.
\end{align}
\begin{align}
D_{\rm m} \vcentcolon=
\left\{
\begin{array}{ll}
I_P \otimes D_N^\tT & \mbox{mirror boundary},\\
I_P \otimes \left( D_{N}^{\per} \right)^\tT & \mbox{periodic boundary}.
\end{array}
\right. 
\end{align}
Finally, we introduce the  vectors
\begin{align}
f^+ \vcentcolon= \frac12 \bigl( f_0^\tT, \zb 0,  f_1^\tT \bigr)^\tT
, \qquad
f^- \vcentcolon= P \bigl(- f_0^\tT,\zb  0,  f_1^\tT \bigr)^\tT,
\end{align}
where we  denote by ${\bf 0}$ (and ${\bf 1}$) arrays of appropriate size 
with entries 0 (and 1). They are used to guarantee that the boundary conditions are fulfilled.
Now the continuity equation \eqref{cont} 
together with the boundary conditions \eqref{start} for $f$ 
can be reformulated as requirement that  
$(m,f)$ has to lie within the hyperplane
\begin{equation} \label{cont_disc}
\mathcal{C}_0  \coloneqq \left\{ \begin{pmatrix} m\\f \end{pmatrix}\colon 
\underbrace{( D_{\rm m} |  D_{\rm f} )}_{A}
\begin{pmatrix} m\\f \end{pmatrix} = f^- \right\}.
\end{equation}
We will see in Proposition \ref{moore-penrose-constr} that $A A^\tT$ is rank one deficient.
Since further $\zb 1^\tT A = \zb 0$, we conclude that the under-determined 
linear system in \eqref{cont_disc}
has a solution if and only if $\zb 1^\tT f^- = 0$, i.e., if and only if $f_0$ and $f_1$ have the same mass
\begin{equation} \label{continuity_cond}
\zb 1^\tT f_0 = \zb 1^\tT f_1.
\end{equation}

This resembles the fact that dynamic optimal transport is performed between probability measures.
The interpretation of a color image as a probability density function has a major drawback; to represent a valid density, 
the  sum  of all RGB pixel values of the given images, i.e., the sum of the image intensity values, has to be one (or at least equal). 
Therefore,  we consider more general the set
\begin{equation} \label{C}
\mathcal{C}  \coloneqq \argmin_{(m,f)} \| ( D_{\rm m} |  D_{\rm f} )
\begin{pmatrix} m\\f \end{pmatrix} -f^-\|_2^2. 
\end{equation}
Note that the boundary conditions \eqref{start} for $f$ are preserved, while the mass conservation \eqref{continuity_cond}
is no longer required. Clearly, if \eqref{continuity_cond} holds true, then ${\cal C}$ coincides with ${\cal C}_0$.
Let $\iota_{\mathcal{C}}$ denote the indicator function of ${\cal C}$ defined by
\begin{equation} 
\iota_{\mathcal{C}} (x) \coloneqq \left\{
\begin{array}{ll}
0      &{\rm if} \; x \in \mathcal{C},\\
+\infty&{\rm if} \; x \not\in \mathcal{C}.
 \end{array}
\right.
\end{equation}
For $p \in (1,2]$, we consider the following transport problem:
\\[2ex]
{\bf Constrained Transport Problem}:
\begin{equation} \label{model_disc}
\argmin_{ (m,f) }  E(m,f) \coloneqq \| J_p(S_{\rm m} m, S_{\rm f} f + f^+) \|_1 
+ \iota_{\cal C}(m,f).
\end{equation}
Here, the application of $J_p$ is meant componentwise and the summation over its (non-negative) components
is addressed by the $\ell_1$-norm. 
The interpolation operators $S_{\rm m}$ and $S_{\rm f}$ arise from the midpoint rule for computing the integral.\\
We can further relax the relaxed continuity assumption $(m,f) \in {\cal C}$ by replacing it by
$$\| ( D_{\rm m} |  D_{\rm f} )
\begin{pmatrix} m\\f \end{pmatrix} -f^-\|_2^2 \le \tau,
$$
where 
$\tau \ge \tau_0 \coloneqq \min_{(m,f)} \| ( D_{\rm m} |  D_{\rm f} )
( m^\tT,f^\tT)^\tT -f^-\|_2^2$.
For $\tau = \tau_0$ we have again problem \eqref{model_disc}.
Since there is a correspondence between the solutions of such constrained problems with parameter $\tau$
and the penalized problem with a corresponding parameter $\lambda$, see \cite{ABF13,Ro70,TSC13},
we prefer to consider the following penalized problem
with regularization parameter $\lambda >0$:
\\[2ex]
{\bf Penalized Transport Problem}:
\begin{align} \label{model_disc_pen}
\argmin_{ (m,f) } E_\lambda(m,f) \coloneqq \| J_p(S_{\rm m} m, S_{\rm f} f + 
f^+) 
\|_1
+ \lambda \|( D_{\rm m} |  D_{\rm f} )
\begin{pmatrix} m\\f \end{pmatrix} - f^-\|_2^2 .
\end{align}
Note that also for our penalized model the boundary conditions \eqref{start} for $f$ still hold true.
In general, both models \eqref{model_disc} and \eqref{model_disc_pen} do not guarantee that the values of $f$
stay within the RGB cube during the transport.
This is not a specific problem for color images but can  appear for gray-value images as well (gamut problem). 
A usual way out is a final backprojection 
onto the image range. 
An alternative  in the constrained model is
a simple modification of the constraining set in \eqref{C} towards
$
\mathcal{C}  \coloneqq \argmin_{m,f \in [0,1]^3} \| ( D_{\rm m} |  D_{\rm f} )
\begin{pmatrix} m\\f \end{pmatrix} -f^-\|_2^2 
$.
This  leads to inner iterations of the Poisson solver and a projection onto the cube in the subsequent Algorithm 1.
In the penalized problem,  the term $\iota_{[0,1]^3}$ could be added. 
However, we observed in all our numerical experiments only very small violations of the range constraint, which are most likely caused by numerical reasons.\\
A penalized model for the continuous setting and gray-value images was examined in \cite{MRSS14}.
For recent papers on unbalanced transport we refer to \cite{Ben03,CSPV15,FPPA14,FZMAP15}. 
\\
To show the existence of a solution of the discrete transport problems we use the concept
of asymptotically level stable functions.
As usual, for a function $F\colon \mathbb R^n \rightarrow \mathbb R \cup 
\{+\infty\}$ 
and $\mu >\inf_x F(x)$, the level sets are defined by
\[
\lev(F,\mu) \coloneqq \{x \in \R^n: F(x) \leq \mu \}.
\]
By  $F_\infty$ we denote the {\it asymptotic (or recession) function} of $F$ which
according to \cite{Ded77}, see also \cite[Theorem 2.5.1]{AT03}, can be computed by 
	\[
	F_\infty (x) =
	\liminf_{\atop{x'\rightarrow x}{t\rightarrow\infty}}\frac{F(tx')}{t}.
	\]
The following definition of \emph{asymptotically level stable} functions is taken from \cite[p.~94]{AT03}:
a proper and lower semicontinuous function 
$F\colon\R^{n} \rightarrow \R \cup \{+\infty\}$
is said to be {\sl asymptotically level stable}
if for each $\rho>0$,
each real-valued, bounded sequence $\{\mu_k\}_k$ and
each sequence $\{x_k\}_k$ satisfying
\begin{align} \label{aal}
x_k \in \lev(F,\mu_k), \quad \|x_k\|_2 \rightarrow +\infty, \quad
\frac{x_k}{\|x_k\|_2} \rightarrow \tilde x \in \ker (F_\infty),
\end{align}
there exists $k_0$ such that
\begin{equation} \label{lsa}
x_k- \rho \tilde x \in \lev(F,\mu_k)\qquad \text{for all } k\geq k_0.
\end{equation}
If for each real-valued, bounded sequence $\{\mu_k\}_k$ there exists
no sequence $\{x_k\}_k$
satisfying \eqref{aal}, then $F$ is automatically asympto\-ti\-cally level stable.	
In particular, coercive functions are  asymptotically level stable.
It was originally exhibited in \cite{BBGT98} (without the
notion of  asymptotically level stable functions) that any  asymptotically level stable function $F$
with $\inf F>-\infty$ has a global minimizer.
A proof was also given in \cite[Corollary 3.4.2]{AT03}.
With these preliminaries we can prove the existence  of minimizers of our transport models.
%
\begin{proposition}\label{prop:existence}
	The discretized dynamic transport models~\eqref{model_disc} and ~\eqref{model_disc_pen} have a solution.
\end{proposition}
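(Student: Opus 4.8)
The plan is to apply the existence criterion recalled just above the statement: a proper, lower semicontinuous function that is bounded below and asymptotically level stable has a global minimizer (\cite{BBGT98}, \cite[Corollary~3.4.2]{AT03}). So I have to verify for both functionals $E$ in \eqref{model_disc} and $E_\lambda$ in \eqref{model_disc_pen} that they are (i) proper, (ii) lsc, (iii) bounded below, and (iv) asymptotically level stable — and for (iv) I will aim at the clean statement that their recession functions vanish only at the origin, so that condition \eqref{aal} can never be satisfied (equivalently, $E$ and $E_\lambda$ are even coercive). Steps (ii) and (iii) are routine: by Remark~\ref{J_p} the function $J_p$ from \eqref{def_J_p} is convex, lsc and nonnegative, and these properties survive composition with the linear maps $S_{\rm m},S_{\rm f}$, translation by $f^+$, the summation inside $\|\cdot\|_1$, addition of the nonnegative quadratic penalty, and addition of $\iota_{\cal C}$ (note ${\cal C}$ from \eqref{C} is a nonempty, closed affine subspace, being a least-squares solution set); hence $E,E_\lambda$ are convex, lsc and $\ge 0$. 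For (i) I would exhibit a point of finite value: for $E_\lambda$ take $m=0$ and $f$ the time-linear interpolant of $f_0,f_1$, so $S_{\rm m}m=0$, $S_{\rm f}f+f^+\ge 0$, the $J_p$-term is $0$ and the penalty is finite; for $E$ one argues analogously to find a point of ${\cal C}$ on which the $J_p$-term is finite (this needs a short look at the attainable $f$-slices, and one may first reduce to strictly positive $f_0,f_1$ by a vanishing perturbation).

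The core of the argument is (iv). Since $E,E_\lambda$ are convex, I would evaluate recession functions as limits of difference quotients along rays. The quadratic penalty $h(m,f)=\|(D_{\rm m}| D_{\rm f})(m,f)^\tT-f^-\|_2^2$ has $h_\infty=\iota_{\ker A}$ with $A=(D_{\rm m}| D_{\rm f})$, and $(\iota_{\cal C})_\infty=\iota_{\ker A}$ since ${\cal C}$ is affine with direction $\ker A$. The transport term is where Remark~\ref{J_p} pays off: $J_p$ is positively $1$-homogeneous, hence equals its own recession function, so $\|J_p(S_{\rm m}\cdot,S_{\rm f}\cdot+f^+)\|_1$ — an affine precomposition of a separable sum of copies of $J_p$ — has recession function $\|J_p(S_{\rm m}\cdot,S_{\rm f}\cdot)\|_1$ by standard recession calculus, the shift $f^+$ disappearing. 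With $J_p(x,y)=0$ iff $x=0$, $y\ge 0$, this yields in both cases
\begin{equation*}
\ker E_\infty=\ker(E_\lambda)_\infty=\bigl\{(m,f):A(m,f)^\tT=0,\ S_{\rm m}m=0,\ S_{\rm f}f\ge 0\bigr\}.
\end{equation*}

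It then remains to show this set is $\{0\}$. For mirror boundary conditions, $S_{\rm m}=I_P\otimes S_N^\tT$ has full column rank, so $m=0$; then $A(0,f)^\tT=(D_P^\tT\otimes I_N)f=0$ forces $f=0$ by injectivity of $D_{\rm f}$. For periodic boundary with $N$ odd, $S_N^{\per}$ is invertible and the same applies. The one delicate case is periodic boundary with $N$ even: there $\ker S_{\rm m}=\R^P\otimes\operatorname{span}\{v\}$ with $v=(1,-1,\dots,1,-1)^\tT$ the alternating vector, so $m$ is a tensor $c\otimes v$; pushing this through $A(m,f)^\tT=0$ and using injectivity of $D_P^\tT$ forces $f$ to have the form $g\otimes v$ as well; but then $S_{\rm f}f=(S_P^\tT g)\otimes v$ has entries of both signs, so $S_{\rm f}f\ge 0$ forces $S_P^\tT g=0$, hence $g=0$, and back-substitution gives $c=0$. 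Thus the recession functions are nonnegative and vanish only at $0$, so $E$ and $E_\lambda$ are coercive, in particular asymptotically level stable, and the cited criterion gives a minimizer of each.

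The step I expect to cost the most is the recession-function computation in (iv): getting the shift $f^+$ to drop out cleanly (which I would do via the $1$-homogeneity of $J_p$ from Remark~\ref{J_p} rather than a term-by-term limit over the cases in \eqref{def_J_p}), and, above all, the periodic-$N$-even situation, where the alternating vector $v$ lies in the kernels of the periodic averaging and difference matrices and triviality of the recession kernel is rescued only by the sign constraint $S_{\rm f}f\ge 0$ inherited from the $y>0$ branch of $J_p$.
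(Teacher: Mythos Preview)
Your proposal is correct and runs along the same lines as the paper's proof: both compute the asymptotic (recession) function, arrive at the three conditions $A(\tilde m,\tilde f)=0$, $S_{\rm m}\tilde m=0$, $S_{\rm f}\tilde f\ge 0$ on the kernel, dispose of the mirror and odd-$N$ periodic cases via $\ker S_{\rm m}=\{0\}$, and in the even-$N$ periodic case exploit the tensor structure with the alternating vector.

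The one substantive difference is the endgame. The paper, after obtaining $S_{\rm f}\tilde f=0$ together with $S_{\rm m}\tilde m=0$ and $A(\tilde m,\tilde f)=0$, does \emph{not} conclude that $(\tilde m,\tilde f)=0$; it instead observes that these three relations make the shift $(m_k,f_k)\mapsto (m_k,f_k)-\rho(\tilde m,\tilde f)$ leave $E_\lambda$ (resp.\ $E$) invariant, and verifies the level-stability condition \eqref{lsa} directly. You go one step further and use the injectivity of $S_{\rm f}=S_P^\tT\otimes I_N$ (equivalently of $S_P^\tT$) to deduce $\tilde f=0$, and then back-substitute through $D_{\rm f}\tilde f=-D_{\rm m}\tilde m$ to get $\tilde m=0$; hence $\ker E_\infty=\{0\}$ and both functionals are actually coercive. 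This is sharper than what the paper states, and your reasoning is sound: once $f=g\otimes v$ and $(S_P^\tT g)\otimes v\ge 0$ forces $S_P^\tT g=0$, injectivity of $S_P^\tT$ gives $g=0$, whence $f=0$ and then $c=0$. So you recover the paper's conclusion as a special case, at the price of one extra (easy) injectivity check; the paper's route avoids that check but needs the explicit verification of \eqref{lsa}.
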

%
\begin{proof}
We show 	that the proper, lower semicontinuous functions $E$ and $E_\lambda$ are asympto\-ti\-cally level stable 
which implies the existence of a minimizer.  
For the penalized problem, the asymptotic function $E_{\lambda,\infty}$ reads 	
\begin{equation*}
	E_{\lambda, \infty}(m,f)=\liminf_{\substack{(m',f')\to (m,f),\\ t\to \infty}}\frac{E_\lambda \bigl(t(m',f')\bigr)}{t}.
\end{equation*}
We obtain
\begin{align*}
\frac{E_\lambda\bigl(t(m',f')\bigr)}{t}& = \frac{1}{t}\left( \| 
J_p\bigl(t(S_{\rm m} m' , S_{\rm f} f'+ \frac{1}{t} f^+)\bigr)\|_1 + \lambda \|( D_{\rm 
m} |  D_{\rm f} )\begin{pmatrix} tm'\\tf' \end{pmatrix} - f^-\|_2^2\right)\\
& = \| J_p(S_{\rm m} m' , S_{\rm f} f'+ \frac{1}{t}f^+)\|_1 + \lambda t\|( 
D_{\rm m} |  D_{\rm f} )\begin{pmatrix} m'\\f' \end{pmatrix} -  
\frac{1}{t}f^-\|_2^2.
\end{align*}
Thus, 	$(\tilde{m},\tilde{f})\in \ker (E_{\lambda,\infty})$ implies 
\begin{align}\label{existence_cond}
 (\tilde{m},\tilde{f})\in \ker(D_{\mathrm{m}}|D_{\mathrm{f}}), \quad \tilde{m} \in \ker (S_{\mathrm{m}}), \quad S_{\mathrm{f}}\tilde{f}\geq 0.
\end{align}
For the constrained problem we have the same implications so that we can restrict our attention  to the penalized one.
By the definition of  $S_{{\rm m}}$ we obtain
${\rm ker } (S_{{\rm m}}) = 
\left\{w \otimes \tilde {\zb 1}: w \in \mathbb R^P\right\}$ for periodic boundary conditions and even $N$ and ${\rm ker } (S_{{\rm m}}) = \{\zb 0\}$  otherwise,
where is defined as $\tilde {\bf 1} = (1,-1,\ldots,1,-1)^\tT\in \mathbb R^N$.
In the case ${\rm ker} (S_{{\rm m}}) = \{\zb 0\}$,
the first and second condition in \eqref{existence_cond} imply 
$D_{{\rm f}} \tilde f = 0$ so that by the definition of $D_{{\rm f}}$ also
$\tilde f = 0$.
In the other case, we obtain by the first condition in \eqref{existence_cond} that
$\tilde f = -D_{{\rm f}}^\dagger D_{{\rm m}} \tilde m$,
where
$
D_{{\rm f}}^\dagger = (D_{{\rm f}}^\tT D_{{\rm f}})^{-1}D_{{\rm f}}^\tT
$
denotes the Moore-Penrose inverse of $D_{{\rm f}}$.
Then
$$
S_{{\rm f}} \tilde f =- S_{{\rm f}} D_{{\rm f}}^\dagger D_{{\rm m}} \tilde m 
= -S_{{\rm f}} D_{{\rm f}}^\dagger D_{{\rm m}} (w \otimes \tilde {\zb 1})
$$
for some $w \in \mathbb R^P$.
Straightforward computation shows
	\begin{equation*}
S_{{\rm f}} \tilde f =-	S_{\rm f} D^\dagger_{\rm f} D_{\rm m} \left( w \otimes \tilde{\zb 1}_N \right) 
=- \tilde w \otimes \tilde{\zb 1}_N 
	\end{equation*}
for some $\widetilde{w}\in \mathbb R^P$.
Now the third condition in \eqref{existence_cond} can only be fulfilled if 
$\tilde w = \zb 0$.
Consequently we have in both cases
\begin{equation} \label{condition_add}
S_{{\rm f}} \tilde f = 0.
\end{equation}
Let $\rho>0$,  $\{\mu_k\}_k$ be a bounded sequence and 
$\{(m_k,f_k)\}_k$ be a sequence fulfilling~\eqref{aal}. 
By \eqref{existence_cond} and \eqref{condition_add} we conclude
	\begin{align}
		E_\lambda \bigl((m_k,f_k)-\rho(\tilde{m},\tilde{f})\bigr)
		 &=\| J_p(S_{\rm m} m_k , S_{\rm f} f_k + f^+) \|_1	+ \lambda \|( D_{\rm m} |  D_{\rm f} )
		\begin{pmatrix} m_k\\f_k \end{pmatrix} - f^-\|_2^2\\
		 &= E_\lambda \bigl((m_k,f_k)\bigr).
	\end{align}
Since $(m_k,f_k)\in \lev(E_\lambda,\mu_k)$, this shows that $(m_k,f_k)-\rho(\tilde{m},\tilde{f})\in \lev(E_\lambda,\lambda_k)$ 
as well and finishes the proof.	
\end{proof}
Unfortunately, 
$J_p(u,v)$ is not strictly convex on its domain as it can be deduced from the following proposition.
%
\begin{proposition} \label{perspective}
For any two minimizers $(m_i,f_i)$, $i=1,2$ of \eqref{model_disc}
the relation
$$\frac{S_{\rm m} m_1}{S_{\rm f} f_1 + f^-} = \frac{ S_{\rm m}m_2}{S_{\rm f} f_2 + f^-}$$
holds true.
\end{proposition}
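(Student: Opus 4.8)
The plan is to use that $E$ is convex, so that its set of minimizers is convex, and then to read off the claimed proportionality from the equality case of the convexity inequality for the perspective function $J_p$; here and below $f^+$ denotes the vector appearing as the second argument of $J_p$ in \eqref{model_disc}. First I would note that $E$ is convex: the set $\mathcal{C}$ in \eqref{C} is convex, being the solution set of a linear least-squares problem (an affine subspace), so $\iota_{\mathcal{C}}$ is convex; and since $J_p\geq 0$ componentwise, the first summand of $E$ equals $\zb 1^\tT J_p(S_{\rm m} m, S_{\rm f} f + f^+)$, a nonnegative combination of the convex function $J_p$ (Remark~\ref{J_p}) precomposed with affine maps, hence convex. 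Consequently, given two minimizers $(m_1,f_1)$ and $(m_2,f_2)$, their midpoint $(\bar m,\bar f)\coloneqq \tfrac12(m_1+m_2,\,f_1+f_2)$ is again a minimizer, all three attaining the (finite) minimum, and $E(\bar m,\bar f)=\tfrac12 E(m_1,f_1)+\tfrac12 E(m_2,f_2)$.

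All three points lie in $\mathcal{C}$, so the indicator terms cancel. Using $S_{\rm m}\bar m=\tfrac12(S_{\rm m}m_1+S_{\rm m}m_2)$ and $S_{\rm f}\bar f+f^+=\tfrac12\bigl((S_{\rm f}f_1+f^+)+(S_{\rm f}f_2+f^+)\bigr)$ together with the componentwise convexity of $J_p$, I would obtain for every component index $j$
$$J_p\bigl((S_{\rm m}\bar m)_j,(S_{\rm f}\bar f+f^+)_j\bigr)\leq\tfrac12 J_p\bigl((S_{\rm m}m_1)_j,(S_{\rm f}f_1+f^+)_j\bigr)+\tfrac12 J_p\bigl((S_{\rm m}m_2)_j,(S_{\rm f}f_2+f^+)_j\bigr).$$
Summing over $j$ reproduces the identity $E(\bar m,\bar f)=\tfrac12 E(m_1,f_1)+\tfrac12 E(m_2,f_2)$, and since all summands are finite and nonnegative, each of these componentwise inequalities must in fact be an equality. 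It then remains to characterize this equality case. I would prove the following: if $\psi(s)=\tfrac1p|s|^p$, which is strictly convex for $p\in(1,2]$, and $J_p(x,y)=y\,\psi(x/y)$ is its perspective, and $y_1,y_2>0$ satisfy $J_p\bigl(\tfrac{x_1+x_2}{2},\tfrac{y_1+y_2}{2}\bigr)=\tfrac12 J_p(x_1,y_1)+\tfrac12 J_p(x_2,y_2)$, then $x_1/y_1=x_2/y_2$. Putting $s_i\coloneqq x_i/y_i$ and $\mu\coloneqq y_1/(y_1+y_2)\in(0,1)$, one has $\tfrac{x_1+x_2}{y_1+y_2}=\mu s_1+(1-\mu)s_2$, so positive homogeneity of $J_p$ (Remark~\ref{J_p}) yields
$$J_p\Bigl(\tfrac{x_1+x_2}{2},\tfrac{y_1+y_2}{2}\Bigr)=\tfrac{y_1+y_2}{2}\,\psi\bigl(\mu s_1+(1-\mu)s_2\bigr)\leq\tfrac{y_1+y_2}{2}\bigl(\mu\psi(s_1)+(1-\mu)\psi(s_2)\bigr)=\tfrac12 J_p(x_1,y_1)+\tfrac12 J_p(x_2,y_2),$$
with equality iff $\psi(\mu s_1+(1-\mu)s_2)=\mu\psi(s_1)+(1-\mu)\psi(s_2)$, which by strict convexity of $\psi$ and $\mu\in(0,1)$ forces $s_1=s_2$. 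Applying this componentwise gives $\frac{(S_{\rm m}m_1)_j}{(S_{\rm f}f_1+f^+)_j}=\frac{(S_{\rm m}m_2)_j}{(S_{\rm f}f_2+f^+)_j}$ on every component whose denominators are positive, which is the assertion.

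I expect the main obstacle to be twofold, both a matter of care rather than of new ideas. First, the equality-case argument for the \emph{perspective} (the step above): since $J_p$ is never strictly convex due to its homogeneity, one must route the argument through the reduction $J_p(\tfrac{x_1+x_2}{2},\tfrac{y_1+y_2}{2})=\tfrac{y_1+y_2}{2}\psi(\mu s_1+(1-\mu)s_2)$ in order to invoke strict convexity of $\psi$ proper. Second, the degenerate components where $(S_{\rm f}f_i+f^+)_j=0$: there finiteness of $J_p$ at a minimizer forces the corresponding numerator $(S_{\rm m}m_i)_j$ to vanish as well, so nothing is lost and the stated identity holds in the natural extended ($0/0$) sense, but this bookkeeping should be made explicit. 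The rest -- convexity of $E$, convexity of the minimizer set, the componentwise splitting of the objective -- is routine convex analysis.
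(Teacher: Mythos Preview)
Your proposal is correct and follows essentially the same route as the paper: both arguments exploit that $J_p$ is the perspective of the strictly convex $\psi(s)=\tfrac1p|s|^p$, rewrite $J_p$ at a convex combination as $(\lambda v_1+(1-\lambda)v_2)\,\psi(\mu s_1+(1-\mu)s_2)$, and use strict convexity of $\psi$ to force $s_1=s_2$. Your version is in fact more carefully organized than the paper's terse proof --- you make the convexity of the minimizer set and the passage from global to componentwise equality explicit, and you address the degenerate components with vanishing denominator (and you also caught the $f^-$/$f^+$ typo in the statement).
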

%
\begin{proof} 
We use the perspective function notation from Remark \ref{J_p}.
For $\lambda \in (0,1)$ and $(u_i,v_i)$ with $v_i >0$, $i=1,2$, we have (componentwise)
\begin{align}
  J_p \left( \lambda (u_1,v_1) + (1-\lambda) (u_2,v_2) \right)
  & =
 \left( \lambda v_1 + (1-\lambda)v_2 \right) 
 \psi \left( \tfrac{\lambda u_1 + (1-\lambda)u_2}{\lambda v_1 + (1-\lambda)v_2} \right)\\
&= 
(\lambda v_1 + (1-\lambda) v_2) \psi 
\left( 
\tfrac{\lambda v_1}{\lambda v_1 + (1-\lambda)v_2} \tfrac{u_1}{v_1}\right. 
\left.
+ \tfrac{(1-\lambda) v_2}{\lambda v_1 + (1-\lambda)v_2} \tfrac{u_2}{v_2}
\right) 
\end{align}
and if 
$\frac{u_1}{v_1} \not = \frac{u_2}{v_2}$
by the strict convexity of $\psi$ that
\begin{align*}
J_p \left( \lambda (u_1,v_1) + (1-\lambda) (u_2,v_2) \right)
 <\lambda J_p(u_1,v_1) + (1-\lambda)J_p(u_2,v_2).
\end{align*}
Setting $u_i \coloneqq S_{\rm m} m_i$ and $v_i \coloneqq S_{\rm f} f_i + f^-$, 
$i=1,2$, we obtain the assertion.
\end{proof}
%
\begin{remark} \label{unique}
For periodic boundary conditions, even $N$ and 
$f_1 = f_0 + \gamma \tilde {\zb 1}$,
$\gamma \in [0,\min f_0 )$ the minimizer of \eqref{model_disc} is not unique. 
This can be seen as follows: Obviously, we would have a minimizer $(m,f)$ if 
$m = w \otimes \tilde {\zb 1} \in {\rm ker}(S_{\rm m})$ for some $w \in \mathbb R^P$
and there exists $f\ge 0$ which fulfills the constraints.
Setting $f^{k/P} \vcentcolon= f(j-1/2,k)_{j=1}^N$, $k=0,\ldots,P$, 
these constraints read
$- 2 P w \otimes \tilde {\zb 1} = P (f^{(k-1)/P} - f^{k/P})_{k=1}^P$. 
Thus, any $w \in \mathbb R^P$ such that
\begin{align} 
	f^{1/P} &= f_0 + 2w_1 \tilde {\zb 1}, \;
	f^{2/P} = f_0 + 2(w_1 + w_2) \tilde {\zb 1}, \ldots \, ,\;
	f^{1} = f_0 + 2(w_1 + w_2+ \ldots+w_P) \tilde {\zb 1}
\end{align}
are nonnegative vectors provides a minimizer of \eqref{model_disc}.
We conjecture that the solution is unique in all other cases,
but have no proof so far.
\end{remark}
%
\section{Primal-Dual Minimization Algorithm} \label{sec:alg}
\subsection{Algorithms} \label{subsec:basicalg}
%
For the minimization of our functionals we apply the primal-dual algorithm 
known as 
Chambolle-Pock algorithm \cite{CP11,PCCB09} in the form of Algorithm 8 in \cite{BSS14}.
We use the following reformulation of the problems: 

{\bf Constrained Transport Problem}:
\begin{align} \label{model_disc_pdhg}
\argmin_{ (m,f) } &\| J_p(u,v) \|_1 + \iota_{\cal C}(m,f) \\
&\mbox{subject to} \quad 
S_{\rm m} m = u,\; S_{\rm f} f + f^+ = v.
\end{align}
%
	\begin{algorithm*}
	{\small	\KwInit{$m^{(0)}= \zb 0$, $f^{(0)}= \zb 0$, 
		$b_m^{(0)}=b_f^{(0)}=\bar b_u^{(0)}=\bar b_v^{(0)}=\zb 0$, 
		$\theta \in (0,1]$,\\ $\tau,\sigma$ with  $\tau \sigma < 1$.} \\
		\KwIter{For $r = 0,1,\ldots$ iterate}
		\begin{align}
			1. \begin{pmatrix} m^{(r+1)} \\ f^{(r+1)} \end{pmatrix}  
			&\vcentcolon= \ \argmin_{(m,f) \in {\cal C}} 
			 \frac{1}{2\tau} \| 
			\begin{pmatrix} m \\ f \end{pmatrix} 
			- \begin{pmatrix} m^{(r)} \\ f^{(r)} \end{pmatrix} +
			\tau \sigma  
			\begin{pmatrix} 
			S_{\rm m}^\tT \bar b_u^{(r)}\\
			S_{\rm f}^\tT \bar b_v^{(r)} 
			\end{pmatrix} \|_2^2
			\\
			2. \ \begin{pmatrix} u^{(r+1)} \\ v^{(r+1)} \end{pmatrix}  
			&\vcentcolon= \ \argmin_{(u,v)} \|{J}_p(u,v)\|_1 
			 + \frac{\sigma}{2} 
			 \|
			\begin{pmatrix} u \\ v \end{pmatrix} 
			 - \begin{pmatrix} 
			S_{\rm m} m^{(r+1)} \\
			S_{\rm f}f^{(r+1)}
			\end{pmatrix}
						-
			\begin{pmatrix} 
			 0 \\
			f_b^+
			\end{pmatrix}
			- 
			\begin{pmatrix} b_u^{(r)} \\ b_v^{(r)} \end{pmatrix} 
			\|_2^2			
			\\
			3. \qquad 
			b_u^{(r+1)} &\vcentcolon= \ b_u^{(r)} + S_{\rm m} m^{(r+1)} - u^{(r+1)} \\
			b_v^{(r+1)} &\vcentcolon= \ b_v^{(r)} + S_{\rm f} f^{(r+1)} + f_b^+ - v^{(r+1)}    		
			\\ 
			4. \qquad 
			\bar b_u^{(r+1)} &\vcentcolon= \ b_u^{(r+1)} + \theta (b_u^{(r+1)}-b_u^{(r)} ) \\
			\bar b_v^{(r+1)} &\vcentcolon= \ b_v^{(r+1)} + \theta (b_v^{(r+1)}-b_v^{(r)} )
		\end{align}
		\caption{Primal-Dual Algorithm for the Constrained Problem \protect\eqref{model_disc_pdhg}} \label{alg1}}%
	\end{algorithm*}
%

{\bf Penalized Transport Problem}:
\begin{align} \label{model_pen_pdhg}
\argmin_{ (m,f) } &\| J_p(u,v) \|_1 + \lambda \|\underbrace{( D_{\rm m} |  
D_{\rm f} )}_{A}
\begin{pmatrix} m\\f \end{pmatrix} - f^-\|_2^2\\
& \mbox{subject to} \quad 
S_{\rm m} m = u,\; S_{\rm f} f + f^+ = v.
\end{align}
%
	\begin{algorithm*}
	{\small	\KwInit{$m^{(0)}= \zb 0$, $f^{(0)}= \zb 0$, 
		$b_u^{(0)}=b_v^{(0)}=\bar b_u^{(0)}=\bar b_v^{(0)}= \zb 0$, 
		$\theta \in (0,1]$,\\ $\tau,\sigma$ with  $\tau \sigma < 1$.} \\
		\KwIter{For $r = 0,1,\ldots$ iterate}
		\begin{align}
			&1. \begin{pmatrix} m^{(r+1)} \\ f^{(r+1)} \end{pmatrix}  
			\vcentcolon= \ \argmin_{(m,f)} \frac{\lambda}{2} \|(D_{\rm m} |  
			D_{\rm f}) \begin{pmatrix} m \\ f \end{pmatrix} - f_b^-\|_2^2 +
			 \frac{1}{2\tau} \| 
			\begin{pmatrix} m \\ f \end{pmatrix} 
			- \begin{pmatrix} m^{(r)} \\ f^{(r)} \end{pmatrix} +
			\tau \sigma  
			\begin{pmatrix} 
			S_{\rm m}^\tT \bar b_u^{(r)}\\
			S_{\rm f}^\tT\bar b_v^{(r)}
			\end{pmatrix} \|_2^2
			 \\
			&2 - 4. \quad \mbox{as in Algorithm \ref{alg1}}
		\end{align}
		\caption{Primal-Dual Algorithm for the Penalized Problem \protect\eqref{model_pen_pdhg}} \label{alg2}}%
	\end{algorithm*}
%

In the following we detail the first two steps of Algorithms~\ref{alg1} and~\ref{alg2}:
\begin{itemize}
\item Step 1 of Algorithm~1 requires the  projection onto ${\cal C}$,
\item Step 1 of Algorithm~2 results in the solution of a linear system of equations 
with
coefficient matrix $\lambda A^\tT A + \frac{1}{\tau} I$ whose Schur complement 
can be computed via fast trigonometric transforms,
\item Step 2 of both algorithms is the  proximal map of $J_p$.
\end{itemize}

\subsection{Projection onto ${\cal C}$}
Step~1 of Algorithm~\ref{alg1} requires to find the 
orthogonal projection of 
$a \coloneqq \begin{pmatrix} m^{(r)} \\ f^{(r)} \end{pmatrix} -
			\tau \sigma  
			\begin{pmatrix} 
			S_{\rm m}^\tT \bar b_u^{(r)}\\
			S_{\rm f}^\tT \bar b_v^{(r)} 
			\end{pmatrix}$
onto ${\cal C}$. 
This means that we have to find a
minimizer of $\|A x - f^-\|_2$ for which $\|a-x\|_2$ attains its smallest value.
Substituting $y \coloneqq a-x$ we are looking
for a minimizer $y$ of $\|A y - A a + f^-\|_2$ with smallest norm $\|y\|_2$.
By~\cite[Theorem 1.2.10]{Bjo96}, this minimizer is uniquely determined by
$A^\dagger (A a - f^-)$.
Therefore the projection of $a$ onto ${\cal C}$ is given by
\begin{align}\label{proj_hyper}
\Pi_{{\cal C}} (a) &= a - A^\dagger \left(A a- f^-\right) \\
&= a - A^\tT (A A^\tT)^\dagger \left(A a- f^-\right). 
\end{align} 
Note that the projection onto ${\cal C}$ coincides with the one onto ${\cal 
C}_0$
if the given images $f_0$ and $f_1$ have the same mass.
The Moore-Penrose inverse of the quadratic matrix $A A^\tT$ is defined as follows:
Let $ A A^\tT$ have the spectral decomposition 
$$
A A^\tT = Q \, \diag(\lambda_j)  \, Q^\tT.
$$ 
Then it holds
\begin{align}
(A A^\tT)^\dagger &=  Q \, \diag(\tilde \lambda_j) \, Q^\tT, \ \text{with }
\tilde \lambda_j \vcentcolon= 
\left\{
\begin{array}{ll}
\tfrac{1}{\lambda_j} &{\rm if} \;  \lambda_j >0,\\
0&{\rm otherwise.} 
          \end{array}
          \right.
\end{align}
The following proposition shows the form of $(A A^\tT)^\dagger$ in the one-dimensional 
spatial case. It appears that the projection onto ${\cal C}$ amounts to solve a two-dimensional Poisson equation 
which can be realized depending on the boundary conditions by fast cosine and Fourier transforms in $\mathcal{O}(NP\log(NP))$ operations.
%
\begin{proposition}  \label{moore-penrose-constr}
Let 
$
C_N \vcentcolon= \sqrt{\tfrac{2}{n}} \left( \epsilon_j \cos\frac{j(2k+1)\pi}{2N} \right)_{j,k=0}^{N-1}
$
with
$\epsilon_0 \vcentcolon= 1/\sqrt{2}$ and $\epsilon_j \vcentcolon= 1$, $j=1,\ldots,N-1$
be the $N$-th cosine matrix and 
$
F_N :=
\sqrt{\tfrac{1}{N}} \left( \mathrm{e}^{\frac{-2\pi \imag jk}{N}} \right)_{j,k=0}^n
$
be the $N$-th Fourier matrix.
Set 
${\rm d}_N^{{\rm mirr}} \vcentcolon= $
and 
$\dd^{\per}_N \vcentcolon= \left( 4 \sin^2 \frac{k\pi}{N}\right)_{k=0}^{N-1}$. 
Then the Moore-Penrose inverse $(A A^\tT)^\dagger$ in~\eqref{proj_hyper} is given by
\begin{align}
(A A^\tT)^\dagger = 
\begin{cases}
(C_P^\tT \otimes C_{N-1}^\tT ) \, {\rm diag}(\tilde \dd ) \, (C_P \otimes C_{N-1}) & {\rm mirror \; boundary},\\ 
(C_P^\tT \otimes \bar F_N ) \, {\rm diag}(\tilde \dd ) \, (C_P \otimes F_N) & {\rm periodic \; boundary},
\end{cases}
\end{align}
where

\begin{align*}
\dd \coloneqq
\begin{cases}
I_P \otimes N^2\diag(\dd_{N-1}^{\mirr} ) + P^2 \diag(\dd_P^{\mirr} ) \otimes I_{N-1}& {\rm mirror \; boundary},\vspace{0.25cm}\\
I_P \otimes N^2\diag(\dd_N^{\per} ) + P^2 \diag(\dd_P^{\mirr} ) \otimes I_N& {\rm periodic \; boundary}
\end{cases}
\end{align*}
and 
$\tilde \dd_j \vcentcolon= \frac{1}{\dd_j}$ if $\dd_j >0$ and $\dd_j=0$ otherwise. 
\end{proposition}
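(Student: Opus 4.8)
\quad
The plan is to reduce $AA^\tT$ to a sum of two Kronecker products, each carrying a one--dimensional discrete Laplace operator that acts in a single coordinate direction, and then to read off its spectral decomposition --- hence its Moore--Penrose inverse --- from the known diagonalizations of these operators recalled in the appendix. Since $A=(D_{\rm m}\,|\,D_{\rm f})$ is a horizontal block matrix, $AA^\tT=D_{\rm m}D_{\rm m}^\tT+D_{\rm f}D_{\rm f}^\tT$. Using the elementary tensor--product rules $(U\otimes V)(X\otimes Y)=(UX)\otimes(VY)$, $(U\otimes V)^\tT=U^\tT\otimes V^\tT$ and $I_n^\tT I_n=I_n$ from Appendix~\ref{sec:appB}, each summand separates into a purely temporal and a purely spatial factor:
\[
D_{\rm f}D_{\rm f}^\tT=(D_P^\tT\otimes I_N)(D_P\otimes I_N)=(D_P^\tT D_P)\otimes I_N,
\]
while $D_{\rm m}D_{\rm m}^\tT=I_P\otimes(D_N^\tT D_N)$ for mirror and $D_{\rm m}D_{\rm m}^\tT=I_P\otimes\bigl((D_N^{\per})^\tT D_N^{\per}\bigr)$ for periodic boundary conditions.

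Next I would invoke the diagonalizations of the one--dimensional discrete Laplacians reviewed in the appendix. The matrix $D_P^\tT D_P$ is $P^2$ times the discrete Neumann (mirror) Laplacian and is diagonalized by the cosine matrix, $D_P^\tT D_P=P^2\,C_P^\tT\diag(\dd_P^{\mirr})\,C_P$; likewise $D_N^\tT D_N$ is $N^2$ times the discrete Neumann Laplacian, diagonalized by the cosine matrix with eigenvalue vector $\dd_{N-1}^{\mirr}$ (scaled by $N^2$), and $(D_N^{\per})^\tT D_N^{\per}$ is $N^2$ times the discrete periodic Laplacian, $(D_N^{\per})^\tT D_N^{\per}=N^2\,\bar F_N\diag(\dd_N^{\per})\,F_N$. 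Inserting these into the two summands above and applying $(U\otimes V)(X\otimes Y)=(UX)\otimes(VY)$ once more, $AA^\tT$ takes the form $(C_P^\tT\otimes C_{N-1}^\tT)\,\diag(\dd)\,(C_P\otimes C_{N-1})$ in the mirror case and $(C_P^\tT\otimes\bar F_N)\,\diag(\dd)\,(C_P\otimes F_N)$ in the periodic case, with $\dd$ exactly the vector written in the proposition. Here one uses that $C_P\otimes C_{N-1}$ is orthogonal and $C_P\otimes F_N$ is unitary, which follows from $C_n^\tT C_n=I$, $\bar F_N F_N=I$ and the fact that a Kronecker product of orthogonal resp.\ unitary matrices is again of the same type.

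Consequently $AA^\tT=Q\,\diag(\dd)\,Q^{-1}$ with $Q$ orthogonal (mirror, $Q^{-1}=Q^\tT$) resp.\ unitary (periodic, $Q^{-1}=\bar Q^\tT$) and $\dd$ real and nonnegative. The Moore--Penrose inverse of a matrix presented in this form is obtained by replacing each strictly positive eigenvalue by its reciprocal and keeping the zero eigenvalues; this is precisely the formula for $(AA^\tT)^\dagger$ stated immediately before the proposition (together with its obvious variant for a unitary $Q$), and it yields the asserted expression for $(AA^\tT)^\dagger$ once $\tilde\dd$ is seen to be well defined. The latter holds because an eigenvalue of $AA^\tT$ equals $N^2(\dd_{N-1}^{\mirr})_j+P^2(\dd_P^{\mirr})_k$ (mirror) resp.\ $N^2(\dd_N^{\per})_j+P^2(\dd_P^{\mirr})_k$ (periodic); being a sum of two nonnegative $\sin^2$--type terms, it vanishes only when both terms do, and for the Neumann as well as for the periodic Laplacian this happens solely at the lowest mode $j=k=0$. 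Hence $AA^\tT$ has exactly one vanishing eigenvalue, so $\tilde\dd$ has exactly one zero entry; this also re-proves the rank--one deficiency of $AA^\tT$ announced earlier.

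The computation is essentially mechanical once the appendix's tensor--product rules and Laplace diagonalizations are in hand; the single point requiring care is the bookkeeping in the second step --- matching each factor of $A$ with the correct discrete Laplacian and its boundary type (mirror in time, because the forward time--difference $D_P$ acts from the interior time slices onto the midpoints, and mirror resp.\ periodic in space according to the chosen spatial boundary condition) --- so that the diagonalizing transforms $C_P$, $C_{N-1}$, $F_N$ and the eigenvalue vectors $\dd_P^{\mirr}$, $\dd_{N-1}^{\mirr}$, $\dd_N^{\per}$ come out exactly as in the statement.
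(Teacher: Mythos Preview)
Your proposal is correct and follows essentially the same route as the paper: write $AA^\tT=D_{\rm m}D_{\rm m}^\tT+D_{\rm f}D_{\rm f}^\tT$, reduce each summand via the tensor-product rules to $I_P\otimes(\text{spatial Laplacian})$ and $(\text{temporal Laplacian})\otimes I$, insert the diagonalizations from the appendix, and read off the Moore--Penrose inverse. Your additional remark that the single zero eigenvalue occurs only at the lowest joint mode, re-proving the rank-one deficiency of $AA^\tT$, is a nice bonus the paper states elsewhere but does not spell out in this proof.
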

%
The proof is given in Appendix C.
\subsection{Schur Complement of $\lambda A^\tT A + \frac{1}{\tau} I$}
To find the minimizer in Step~1 of Algorithm~\ref{alg2} we set the gradient of the functional to zero 
which results in the solution of the linear system of equations
\begin{align*}
\left(\lambda A^\tT A + \frac{1}{\tau} I\right)\begin{pmatrix} m \\ f 
\end{pmatrix}=
\lambda A^\tT f^- +
\begin{pmatrix} m^{(r)} \\ f^{(r)} \end{pmatrix} -
	\tau \sigma  
	\begin{pmatrix} 
	S_{\rm m}^\tT \bar b_u^{(r)}\\
	 S_{\rm f}^\tT\bar b_v^{(r)} 
	\end{pmatrix} .
\end{align*}
Noting that $\lambda A^\tT A + \frac{1}{\tau} I$ is a symmetric and positive definite matrix, 
this linear system can be solved using standard conjugate gradient methods. 
Alternatively, the next proposition shows how the inverse $(\lambda A^\tT A + \frac{1}{\tau} I)^{-1}$ 
can be computed explicitly with the help of the Schur complement
and fast sine,-, cosine- and Fourier transforms.
The proposition refers to the one-dimensional spatial setting
but can be generalized to the three-dimensional case in a straightforward way using the results of Appendix C.
%
\begin{proposition} \label{lem:schur}
Let 
$
S_{N-1} \vcentcolon= \sqrt{\tfrac{2}{N}} \left( \sin \frac{jk\pi}{N} \right)_{j,k=1}^{N-1}
$
and
$
\dd^{\zero}_{N-1} \vcentcolon= \left( 4 \sin^2 \frac{k\pi}{2N}\right)_{k=1}^{N-1}
$.
Then the inverse of the matrix 
$
\lambda A^\tT A + \frac{1}{\tau} I
$
is given by
$$
\left(
\begin{array}{ll}
I    & - X^{-1}Y\\
0 & I
\end{array} 
\right)
\left(
\begin{array}{cc}
X^{-1}   & 0\\
0 & S^{-1}
\end{array} 
\right)
\left(
\begin{array}{cc}
I    & 0\\
-Y^\tT X^{-1}& I
\end{array} 
\right),
$$
where\\
{\rm i)} for mirror boundary conditions
\begin{align} \label{schur_mir}
Y &=  D_P^\tT \otimes D_N, \\
X^{-1} &=
I_P \otimes S_{N-1} \diag (\lambda N^2 \dd_{N-1}^{\zero} + \tfrac{1}{\tau} )^{-1} S_{N-1},\\
S^{-1} &= 
 (S_{P-1} \otimes C_N^\tT) 
\diag \Bigl(  \lambda P^2 \dd^{\zero}_{P-1} (1+ \tau \lambda N^2 \dd^{\mirr}_N )^{-1} + \tfrac{1}{\tau}  \Bigr) (S_{P-1} \otimes  C_N),
\end{align}
{\rm ii)} for periodic boundary conditions
\begin{align} \label{schur_per}
Y &=  D_P^\tT \otimes D_N^{\per}, \\
X^{-1} &=
I_P \otimes F_N \diag(\lambda N^2 \dd_N^{\per} + \tfrac{1}{\tau}) ^{-1} \bar F_N,\\
S^{-1}&  = 
(S_{P-1} \otimes F_N) \diag \Bigl(  \lambda P^2 \dd^{\zero}_{P-1} \otimes (1+ \tau \lambda N^2 \dd^{\per}_N )^{-1} + \tfrac{1}{\tau}  \Bigr)^{-1}(S_{P-1} \otimes \bar F_N).
\end{align}
\end{proposition}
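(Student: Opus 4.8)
The plan is to put $\lambda A^\tT A+\tfrac1\tau I$ into $2\times2$ block form, apply the classical block $LDU$ inversion formula, and then diagonalize the two diagonal blocks by fast trigonometric transforms, relying on the diagonalizations of the one-dimensional discrete Laplacians from Appendix~\ref{sec:appA} and the tensor-product calculus from Appendix~\ref{sec:appB}. Writing $A=(D_{\rm m}\,|\,D_{\rm f})$ gives
\[
\lambda A^\tT A+\tfrac1\tau I=\begin{pmatrix}X & Y\\ Y^\tT & Z\end{pmatrix},\qquad
X=\lambda D_{\rm m}^\tT D_{\rm m}+\tfrac1\tau I,\quad Y=\lambda D_{\rm m}^\tT D_{\rm f},\quad Z=\lambda D_{\rm f}^\tT D_{\rm f}+\tfrac1\tau I,
\]
where $D_{\rm m}^\tT D_{\rm f}=D_P^\tT\otimes D_N$ in the mirror and $D_P^\tT\otimes D_N^{\per}$ in the periodic case, so that $Y$ matches the statement up to the harmless scalar $\lambda$. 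Since $\lambda,\tau>0$ and $D_{\rm m}^\tT D_{\rm m}$, $D_{\rm f}^\tT D_{\rm f}$ are positive semidefinite, $X$ and the full matrix are symmetric positive definite; hence $X$ is invertible, its Schur complement $S=Z-Y^\tT X^{-1}Y$ is again positive definite, and the displayed product is precisely the standard formula for the inverse of a symmetric $2\times2$ block matrix. It then remains to diagonalize $X$ and $S$.

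Diagonalizing $X$ is the easy part. In the mirror case $D_{\rm m}=I_P\otimes D_N^\tT$ forces $D_{\rm m}^\tT D_{\rm m}=I_P\otimes D_ND_N^\tT$, and $D_ND_N^\tT$ is $N^2$ times the one-dimensional Dirichlet Laplacian, hence diagonalized by the orthogonal symmetric sine matrix $S_{N-1}$ with eigenvalue vector $N^2\dd^{\zero}_{N-1}$; tensoring with $I_P$ yields the claimed $X^{-1}$. In the periodic case $D_N^{\per}$ is circulant, hence diagonalized by $F_N$ with eigenvalue vector $\dd^{\per}_N$, which gives the second formula. The real work is the Schur complement.

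For $S$ in the mirror case I would use $D_{\rm f}=D_P^\tT\otimes I_N$ and the tensor rules to write
\[
S=\lambda\,D_PD_P^\tT\otimes I_N+\tfrac1\tau I-\lambda^2\,D_PD_P^\tT\otimes D_N^\tT\bigl(\lambda D_ND_N^\tT+\tfrac1\tau I\bigr)^{-1}D_N .
\]
The hard point — and the main obstacle of the proof — is that the correction term naturally acts on $\R^{N-1}$ through $D_ND_N^\tT$, whereas the other two terms act on $\R^{N}$, so it is not a priori clear that $S$ is diagonalized by a single Kronecker product of transforms. The resolution is the singular value decomposition of $D_N$: its left singular vectors form $S_{N-1}$, its right singular vectors form the rows of the cosine matrix $C_N$, and its nonzero squared singular values are exactly the nonzero entries of $N^2\dd^{\mirr}_N$ (namely $N^2\dd^{\zero}_{N-1}$). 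This yields the identity
\[
D_N^\tT\bigl(\lambda D_ND_N^\tT+\tfrac1\tau I\bigr)^{-1}D_N=C_N^\tT\diag\bigl(N^2\dd^{\mirr}_N(\lambda N^2\dd^{\mirr}_N+\tfrac1\tau)^{-1}\bigr)C_N,
\]
i.e.\ precisely the compatibility between the sine- and cosine-type Laplacians recorded in Appendix~\ref{sec:appA}. Combining it with $D_PD_P^\tT=S_{P-1}\diag(P^2\dd^{\zero}_{P-1})S_{P-1}$ shows that $S$ is diagonalized by $S_{P-1}\otimes C_N^\tT$, and collecting the three contributions on the mode $(l,k)$ collapses them to $\lambda P^2(\dd^{\zero}_{P-1})_l\bigl(1+\tau\lambda N^2(\dd^{\mirr}_N)_k\bigr)^{-1}+\tfrac1\tau$; taking reciprocals of this diagonal gives the stated $S^{-1}$. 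The periodic case runs identically after replacing $S_{N-1}$ and $C_N$ by $F_N$, the vectors $\dd^{\zero}_{N-1}$ and $\dd^{\mirr}_N$ by $\dd^{\per}_N$, and both $D_ND_N^\tT$ and $D_N^\tT D_N$ by the common circulant $D_N^{\per}(D_N^{\per})^\tT=(D_N^{\per})^\tT D_N^{\per}$. Apart from this simultaneous diagonalization of the Schur complement, all remaining steps — the block factorization, the positive-definiteness used for invertibility, and the elementary tridiagonal and circulant diagonalizations — are routine bookkeeping.
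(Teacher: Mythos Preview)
Your proposal is correct and follows essentially the same route as the paper's proof: both write $\lambda A^\tT A+\tfrac1\tau I$ in $2\times2$ block form, invoke the Schur complement inversion formula, diagonalize $X$ via the standard Dirichlet/circulant Laplacian diagonalizations, and then reduce the Schur complement to diagonal form by exploiting the singular value decomposition \eqref{svd_diff} of $D_N$ (which is exactly the compatibility between $S_{N-1}$ and $C_N$ you highlight). Your observation about the missing factor $\lambda$ in the stated $Y$ is also on point---the paper's own proof actually defines $Y=\lambda D_{\rm m}^\tT D_{\rm f}$, so the proposition as written carries a harmless but real typographical inconsistency.
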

The proof is given in Appendix C.
\subsection{Proximal Map of ${ J}_p$} \label{subsec:prox_J}
Step~2 of Algorithm~\ref{alg1} consists of an evaluation of the proximal map 
$\prox_{\frac{1}{\sigma} J_p}$ of $J_p$. 
This can be done using the proximal map $\prox_{ J^{\ast}_p}$ of $J^{\ast}_p$ and Moreau's identity $\prox_\phi (t) + \prox_{\phi^*}(t) = t$. 
Therefore, we state in the following first the dual function $J^{\ast}_p$.
%
\begin{lemma} \label{prox_J_p}
	For $p \in (1,+\infty)$ and $\frac1p + \frac1q = 1$ we have
	\begin{align}
	J_p^*(a,b) = 
	\left\{
	\begin{array}{ll}
	0&{\rm if} \; (a,b) \in {\cal K}_p,\\
	+\infty&{\rm otherwise}.
	\end{array}
	\right.
	\end{align}
	where
	$$
	{\cal K}_p \vcentcolon= \left\{ (a,b) \in \mathbb R^d \times \mathbb R\colon \tfrac{1}{q} |a|^q + b \le 0 \right\}.
	$$
\end{lemma}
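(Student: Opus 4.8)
The plan is to compute the Legendre--Fenchel conjugate $J_p^*(a,b) = \sup_{(x,y)} \left( \langle a, x\rangle + by - J_p(x,y) \right)$ directly from the definition \eqref{def_J_p}, exploiting the fact that $J_p$ is the perspective function of $\psi(s) = \frac1p |s|^p$ as noted in Remark~\ref{J_p}. Since $J_p(x,y) = +\infty$ unless $y > 0$ or $(x,y) = (0,0)$, the supremum splits into the contribution from $(0,0)$, which gives $0$, and the contribution over the open half-space $y > 0$. On that half-space, writing $J_p(x,y) = \frac{1}{p} \frac{|x|^p}{y^{p-1}}$, I would first optimize over $x$ for fixed $y > 0$.

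For fixed $y > 0$, the inner problem $\sup_x \left( \langle a, x \rangle - \frac{1}{p y^{p-1}} |x|^p \right)$ is the conjugate of a scaled $p$-th power of the Euclidean norm; by the standard duality $\left( \frac1p |\cdot|^p \right)^* = \frac1q |\cdot|^q$ (with $\frac1p + \frac1q = 1$) together with the scaling rule, this evaluates to $\frac{y^{q-1}}{q} |a|^q$ after using $(p-1)(q-1) = 1$ and $\frac{q}{p} = q - 1$. Substituting back, the outer problem becomes $\sup_{y > 0} \left( by + \frac{y^{q-1}}{q} |a|^q \right)$, which is a supremum of a linear-plus-monomial function of the single positive variable $y$. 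If $b \le -\frac1q |a|^q$, the expression is non-positive for all $y > 0$ and tends to $0$ as $y \to 0^+$, so the supremum is $0$, matching the contribution from $(0,0)$; if $b > -\frac1q |a|^q$, one can let $y \to +\infty$ (the coefficient of the dominant term is positive, recalling $q - 1 \ge 1$ for $p \in (1,2]$, but the argument works for all $p \in (1,\infty)$ since $q - 1 > 0$) and the supremum is $+\infty$. This yields exactly the indicator of ${\cal K}_p = \{(a,b) : \frac1q |a|^q + b \le 0\}$.

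I would present this as: (1) handle the $(0,0)$ term; (2) for $y > 0$ fixed, carry out the $x$-maximization via the known conjugate of $\frac1p|\cdot|^p$ and the positive-homogeneity/scaling of perspective functions; (3) carry out the remaining one-dimensional $y$-maximization, distinguishing the two cases according to the sign of $\frac1q|a|^q + b$; (4) combine. The main obstacle, though it is mild, is bookkeeping the exponent arithmetic among $p$, $q$, $p-1$, $q-1$ correctly, in particular verifying that the $x$-optimal value really produces the exponent $y^{q-1}$ so that the subsequent $y$-optimization has the clean threshold $b + \frac1q|a|^q$; one should also double-check the boundary behavior as $y \to 0^+$ to confirm the supremum is attained (equals $0$) rather than merely approached, which is what forces lower semicontinuity of $J_p^*$ and consistency with the $(0,0)$ case. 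An alternative, essentially equivalent route is to invoke the general formula for the conjugate of a perspective function $(y\psi(x/y))^* = \iota_{\{(a,b) : b + \psi^*(a) \le 0\}}$ (see \cite{DM08}) together with $\psi^*(a) = \frac1q|a|^q$, which collapses the whole computation; I would likely mention this as the conceptual reason and still record the short direct derivation for completeness.
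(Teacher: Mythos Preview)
Your overall strategy is sound and, incidentally, more detailed than what the paper does: the paper does not prove this lemma at all but simply refers to \cite{Ji08} and \cite[Lemma~5.17]{San15}. The alternative you mention at the end---using the general conjugate formula for perspective functions---is exactly the conceptual content of those references.

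There is, however, a computational slip in your direct derivation that breaks the argument as written. The inner supremum over $x$ for fixed $y>0$ does \emph{not} evaluate to $\tfrac{y^{q-1}}{q}|a|^q$; it evaluates to $\tfrac{y}{q}|a|^q$. Indeed, with $\psi=\tfrac1p|\cdot|^p$ and $\psi^*=\tfrac1q|\cdot|^q$, the scaling rule $(\alpha\psi)^*(a)=\alpha\,\psi^*(a/\alpha)$ applied with $\alpha=y^{-(p-1)}$ gives
\[
\bigl(\tfrac{1}{p\,y^{p-1}}|\cdot|^p\bigr)^*(a)
=\frac{1}{y^{p-1}}\cdot\frac{1}{q}\,\bigl|y^{p-1}a\bigr|^q
=\frac{y^{(p-1)(q-1)}}{q}|a|^q
=\frac{y}{q}|a|^q,
\]
since $(p-1)(q-1)=1$. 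With your exponent $q-1$ instead of $1$, the outer problem $\sup_{y>0}\bigl(by+\tfrac{y^{q-1}}{q}|a|^q\bigr)$ would be $+\infty$ for every $a\neq 0$ whenever $q\neq 2$, so the threshold analysis you state would fail. With the correct linear dependence on $y$, the outer supremum is simply $\sup_{y>0} y\bigl(b+\tfrac1q|a|^q\bigr)$, which is $0$ or $+\infty$ according to whether $b+\tfrac1q|a|^q\le 0$, and combining with the $(0,0)$ contribution gives the claim immediately. Fix this exponent and your proof goes through cleanly.
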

%
For the proof we refer to \cite{Ji08} or \cite[Lemma 5.17]{San15}.
After this preparation we are now able to compute $\prox_{\frac{1}{\sigma} J_p}$.
%
\begin{proposition} \label{prox_j_p}
	Let $p \in (1,2]$ and $\frac1p + \frac1q = 1$.\\
	{\rm i)} Then for $x^* \in \mathbb R^d$, $y^* \in \mathbb R$
	and $\sigma > 0$ it holds
	\begin{align}
			\prox_{\frac{1}{\sigma} J_p}(x^*,y^*) 
			= 
			\left\{
			\begin{array}{ll}
			(0,0) & {\rm if} \;  (\sigma x^*,\sigma y^*) \in {\cal K}_p,\\
			\left( x^* \frac{h(\hat z)}{1+h(\hat z)} , y^* + \frac{1}{\sigma q} \hat z^q \right)& {\rm otherwise,}
			\end{array}
			\right.
	\end{align}	
	where
	$$
	h(z) \vcentcolon= (\sigma y^* + \tfrac{1}{q} z^q) z^{q-2}
	$$
	and
	$\hat z \in \mathbb R_{\ge 0}$ is the unique solution 
	of the equation
	\begin{equation} \label{poly_newton}
	z \left( 1+ h(z) \right) - \sigma | x^*| = 0
	\end{equation}
	in the interval $\big[\max(0, z_0)^{\tfrac1q},+\infty\big)$, where 
	$z_0\coloneqq - q \sigma y^*$.
	\\
	{\rm ii)} The Newton method converges for any starting point $z \ge z_0$
	quadratically to the largest zero of \eqref{poly_newton}.
\end{proposition}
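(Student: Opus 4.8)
The plan is to reduce the proximal map to an orthogonal projection onto the set $\mathcal K_p$ of Lemma~\ref{prox_J_p}. Since $J_p$ is closed, convex and positively $1$-homogeneous (Remark~\ref{J_p}) with $J_p^*=\iota_{\mathcal K_p}$, it is the support function of $\mathcal K_p$, so $\tfrac1\sigma J_p$ is the support function of $\tfrac1\sigma\mathcal K_p$ and Moreau's identity gives
$$
\prox_{\frac1\sigma J_p}(x^*,y^*)\;=\;(x^*,y^*)\;-\;\Pi_{\frac1\sigma\mathcal K_p}(x^*,y^*)\;=\;(x^*,y^*)\;-\;\tfrac1\sigma\,\Pi_{\mathcal K_p}\!\big(\sigma x^*,\sigma y^*\big).
$$
If $(\sigma x^*,\sigma y^*)\in\mathcal K_p$ the projection is the point itself and the right-hand side is $(0,0)$, which is the first branch. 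Otherwise the projection lies on $\partial\mathcal K_p=\{\tfrac1q|a|^q+b=0\}$, the graph of $a\mapsto-\tfrac1q|a|^q$. For a fixed radius $|a|=z$ the term $|a-\sigma x^*|^2$ is smallest when $a$ is aligned with $x^*$ (and for $x^*=0$ one sees directly that the projection is the origin), so for $x^*\ne0$ the projection has the form $a=z\,x^*/|x^*|$, $b=-\tfrac1q z^q$ with $z\ge0$, and finding it amounts to minimizing over $z\ge0$ the scalar function
$$
g(z)\;=\;\tfrac12\big(z-\sigma|x^*|\big)^2+\tfrac12\Big(\tfrac1q z^q+\sigma y^*\Big)^2 .
$$

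The core of the argument is the analysis of $g$. One has $g'(z)=z-\sigma|x^*|+z^{q-1}\big(\tfrac1q z^q+\sigma y^*\big)=z\big(1+h(z)\big)-\sigma|x^*|$, which is (for integer $q$) the polynomial $\tfrac1q z^{2q-1}+\sigma y^* z^{q-1}+z-\sigma|x^*|$ of \eqref{poly_newton}, so the stationary points of $g$ on $[0,\infty)$ are exactly its nonnegative roots. Put $z_0\coloneqq-q\sigma y^*$ and $I\coloneqq[\max(0,z_0)^{1/q},\infty)$. On $I$ one has $\tfrac1q z^q+\sigma y^*\ge0$, hence
$$
g''(z)=1+z^{2q-2}+(q-1)z^{q-2}\Big(\tfrac1q z^q+\sigma y^*\Big)\geq 1+z^{2q-2}>0
$$
because $q\ge2$, so $g'$ is strictly increasing on $I$. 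In the ``otherwise'' case $(\sigma|x^*|)^q>z_0$, which forces $g'$ at the left endpoint of $I$ to be $\le0$ (when $z_0>0$ the last summand of $g'$ vanishes there and $\sigma|x^*|>z_0^{1/q}$), while $g'(z)\to+\infty$; and when $z_0>0$ one checks $g'<0$ on $[0,z_0^{1/q})$. Thus $g'$ has a unique zero $\hat z$ in $I$, and $\hat z$ is the unique nonnegative minimizer of $g$. Substituting back, $\Pi_{\mathcal K_p}(\sigma x^*,\sigma y^*)=\big(\hat z\,x^*/|x^*|,\,-\tfrac1q\hat z^q\big)$; inserting this into the displayed Moreau formula gives the second coordinate $y^*+\tfrac1{\sigma q}\hat z^q$ and the first coordinate $x^*\big(1-\hat z/(\sigma|x^*|)\big)$, and using $\sigma|x^*|=\hat z(1+h(\hat z))$ from \eqref{poly_newton} rewrites the latter as $x^*\,h(\hat z)/(1+h(\hat z))$, the asserted expression.

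For part~ii) I would reuse that, on $I$, the function $P\coloneqq g'$ is strictly increasing (just shown, $P'=g''>0$) and also convex, since $P''(z)=z^{q-3}\big(\tfrac{(2q-1)(2q-2)}{q}z^q+(q-1)(q-2)\sigma y^*\big)$ and the bracket is nonnegative on $I$: it is increasing in $z$ and, when $y^*<0$, vanishes at $z^q=-\tfrac{q(q-2)}{2(2q-1)}\sigma y^*$, which is $<z_0$ for $q\ge2$, hence to the left of $I$. For a $C^2$, convex, strictly increasing function with the simple root $\hat z$ (note $P'(\hat z)>0$), Newton's iteration started anywhere in $I$ to the right of $\hat z$ decreases monotonically to $\hat z$, while a start in $[\max(0,z_0)^{1/q},\hat z)$ lands after one step in $[\hat z,\infty)$ by convexity; the convergence is quadratic because $P\in C^2$ near $\hat z$ and $P'(\hat z)\ne0$. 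The step I expect to be the main obstacle is precisely the case $y^*<0$: there $P$ is genuinely non-monotone and non-convex near the origin, and the whole argument hinges on the two elementary inequalities placing the inflection point of $P$ strictly left of $z_0^{1/q}$ and making $\tfrac1q z^q+\sigma y^*$ nonnegative exactly on $I$ — both coming down to $q\ge2$ — which is what confines the relevant zero and the Newton trajectory to the well-behaved half-line $I$. Finally the degenerate subcases $x^*=0$ and $\hat z=0$ need a short separate check, consistent with $\hat z=0$ at the endpoint of $I$ when $x^*=0$ and $y^*>0$.
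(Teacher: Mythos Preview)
Your proof is correct and follows essentially the same route as the paper: Moreau's identity reduces the problem to projecting onto $\mathcal K_p$, the projection on the boundary is characterized by the scalar equation \eqref{poly_newton}, and strict monotonicity together with convexity of $\varphi=g'$ on $I$ yield uniqueness of $\hat z$ and quadratic Newton convergence. The only cosmetic differences are that the paper derives \eqref{poly_newton} from tangent-plane orthogonality conditions rather than your rotational-symmetry reduction to the scalar minimization of $g$, and it restricts the search to $I$ by invoking the domain constraint $\hat y>0$ for $J_p$ rather than your direct check that $g'<0$ on $[0,\max(0,z_0)^{1/q})$.
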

%
\begin{proof}
i)	By Moreau's identity it holds $$\prox_\phi (t) + \prox_{\phi^*}(t) = t$$
	and since $(\tfrac{1}{\sigma} \phi)^*(t) = \tfrac{1}{\sigma} \phi^*(\sigma t)$
	we conclude
	\begin{align} \label{moreau}
	(\hat x, \hat y) &= \prox_{\frac{1}{\sigma} J_p}(x^*,y^*)\\& = (x^*,y^*) - \frac{1}{\sigma} \prox_{\sigma J_p^*}(\sigma x^*,\sigma y^*),
	\end{align}
	where $\prox_{\frac{1}{\sigma} J_p^*} = \prox_{J_p^*}$ since $J_p^*$ is an indicator function.
	Note that by definition of $J_p$ we have that $\hat y \ge 0$ and $\hat y = 0$ only if $|\hat x| = 0$.	
	Now, $\prox_{J_p^*}(\sigma x^*,\sigma y^*)$ is the orthogonal projection of
	$(\sigma x^*,\sigma y^*)$ onto the set ${\cal K}_p$ (we could also compute the epigraphical projection of 
	$(\sigma x^*,\sigma y^*)$
	onto the epigraph of $\phi(x) = \tfrac1q |x|^q$ and reflect $y$, see also Figure~\ref{fig:epi}).	
	\begin{figure}	
		\centering
		\includegraphics[width=0.4\textwidth]{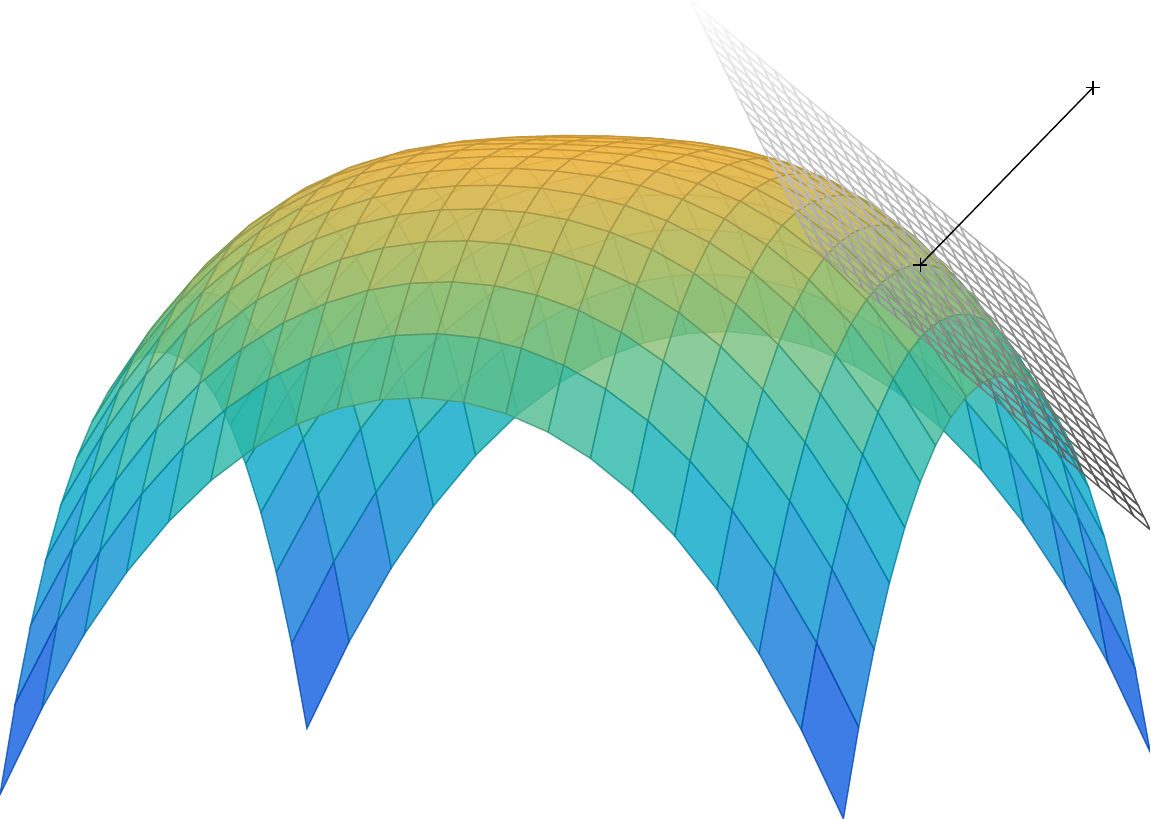}
		\caption{Projection onto the graph of the function $\phi(x) = -\tfrac1q 
		|x|^q$ for $q=3$.}	\label{fig:epi}			
	\end{figure}	
	If $(\sigma x^*, \sigma y^*) \in {\cal K}_p$, then 
	$\prox_{J_p^*}(\sigma x^*,\sigma y^*) = (\sigma x^*,\sigma y^*)$ 
	and 
	$(\hat x, \hat y) = (0,0)$.
	So let $(\sigma x^*, \sigma y^*) \not \in {\cal K}_p$, that means
	\begin{align} \label{not_K}
	\sigma y^* + \tfrac1q |\sigma x^{\ast}|^q >0.
	\end{align}
	The tangent plane of the boundary of 
	${\cal K}_p$ in $(x,y) = (x, -\tfrac1q |x|^q)$ 
	is spanned by
	the vectors $(e_i^\tT, - |x|^{q-2} x_i)^\tT$, $i=1,\ldots,d$, 
	where $e_i \in \mathbb R^d$ denotes the $i$-th canonical unit vector.
	Hence, the projection $(x,y)$ is determined by $y = -\tfrac1q |x|^q$ and
	\begin{align}
	0 & = \left\langle 
	\begin{pmatrix}  \sigma x^* \\ \sigma y^* \end{pmatrix}
	-
	\begin{pmatrix}   x \\  y\end{pmatrix},
	\begin{pmatrix}  e_i \\ - |x|^{q-2} x_i \end{pmatrix}
	\right\rangle\\
	& = \sigma x^*_i - x_i - (\sigma y^* - y) |x|^{q-2} x_i, \qquad i=1,\ldots,d
	\end{align}
	so that
	\begin{align}
	x_i = \frac{\sigma x^*_i}{1+h(|x|)}, \quad i=1,\ldots,d.
	\end{align}
	Summing over the squares of the last equations gives
	\begin{align}
	|x|^2 = |x^*|^2 \frac{\sigma^2}{\left(1+h(|x|)\right)^2}.
	\end{align}
	Since a solution has to fulfill 
	$$
	\hat y = y^* -\tfrac{1}{\sigma} y = y^* + \tfrac{1}{q \sigma } |x|^q = \sigma h(|x|) |x|^{2-q} > 0,
	$$
	it remains to search for the solutions with $h(|x|) > 0$.
	Now, $h(z) >0$ is fulfilled for $z >0$ if and only if
	\begin{align} \label{z0}
	\sigma y^* + \tfrac1q z^q > 0,
	\end{align}
	which is the case if and only if $z > z_0 \vcentcolon= \max(0, - q \sigma y^*)^{\tfrac1q}$.
	Then $z \vcentcolon= |x|$ has to satisfy the equation
	\begin{align} \label{newton}
	z \left( 1+h(z) \right) =  \sigma|x^*|.
	\end{align}
	The function 
	$$
	\varphi(z) \vcentcolon= z \left(1+h(z) \right) - \sigma|x^*|
	$$
	has exactly one zero in $[z_0,+\infty)$. Indeed, by definition of $z_0$ and~\eqref{z0} we see that
	$
	\varphi(z_0) \le 0
	$, but on the other hand we have $\varphi(z) \rightarrow +\infty$ as $z \rightarrow +\infty$, so that $\varphi$ has at least a zero in $[z_0,+\infty)$.
	Since $q \ge 2$ for $p \le 2$ we have for
	$$
	h'(z) = z^{2q-3} + (q-2) (\sigma y^* + \frac{1}{q} z^q) z^{q-3} > 0, \quad z > z_0.
	$$
	Hence $h$ and then also $\varphi$ is strictly monotone increasing for $z > z_0$.
	Therefore $\varphi$ has at most one zero in $[z_0,+\infty)$.
	Finally, the assertion follows by plugging in $(x,y)$ in \eqref{moreau}.
	\\[1ex]
ii)	Straightforward computation gives
	\begin{align}
	h''(z) &= (3q-5)z^{2q-4} + (q-2)(q-3)( \sigma y^* + \tfrac1q z^q ) z^{q-4},\\
	\varphi'(z) &= 1 + h(z) + zh'(z),\\
	\varphi''(z) &= 2h'(z) + z h''(z)\\
	& = 3(q-1) z^{2q-3} + (q-1)(q-2)(\sigma y^* + \tfrac1q z^q) z^{q-3} >0, \quad z>z_0.
	\end{align}
	Since $\varphi$ is monotone increasing and strictly convex
	for $z \ge z_0$, the Newton method converges for any starting point $z \ge z_0$
	quadratically.
	\end{proof}
%
\section{Numerical Results} \label{sec:numerics}
In the following we provide several numerical examples. In all cases we used 
$P=32$ time steps and $2000$ iterations in Algorithm~\ref{alg1} and~\ref{alg2}, respectively. The parameters 
$\sigma$ and $\tau$ were set to $\sigma = 50$ and $\tau = \frac{0.99}{\sigma}$, 
so that $\sigma\tau<1$, which guarantees the convergence of the algorithms. Of course, the algorithms do not use tensor products, but relations such as stated in~\eqref{vec_mat} and their higher dimensional versions. 
The algorithms were implemented in \textsc{Matlab} and the computations were performed on a Dell computer with an Intel Core i7,
2.93\,Ghz and 8\,GB of RAM using \textsc{Matlab} 2014, Version 2014b
on Ubuntu 14.04 LTS. 
Exemplary, the run time for $100\times 100$ color images and 32 time steps varies between 10 and 15 minutes for the constrained and the penalized method, depending on the parameter choices for $p,\sigma,\tau$ and $\lambda$.
In our current implementation of the penalized method the  fast transform approach is nearly as time consuming as
the iterative solution of the linear system of equations with the CG method and an adequate initialization.
If not explicitly stated otherwise, the results are  displayed for $p=2$, in which case the computation of the zeros in~\eqref{poly_newton} slightly simplifies. \\
With our first experiments we illustrate the difference between mirror and periodic boundary conditions in the color dimension, where  at this point that the initial and the final images have the same mass. 
The images are displayed at intermediate timepoints $t = \frac{i}{8},$ where $i = 0,\dots, 8$.
In Figure~\ref{Fig:boundary_color}, the transport of a red Gaussian into a blue one is shown, either with mirror or with periodic boundary conditions in the color dimension. Figure~\ref{Fig:boundary_echt}\footnote{Images from Wikimedia Commons: 
AGOModra\_aurora.jpg by 
Comenius University under CC BY SA 3.0, 
Aurora-borealis\_andoya.jpg by M.~Buschmann under CC BY 3.0.} depicts the transport between two real images of polar lights.
In order to have the equal mass constraint fulfilled, we first normalized both images to mass 1 and afterwards multiplied them with a common factor such that both images have realistic colors. Of course, this procedure works only if the initial and the final image share approximately the same mass.
  In both cases the use of periodic boundary conditions yields more realistic results.\\%
\begin{figure*} 
	\centering
	{\includegraphics[width=.1\textwidth]{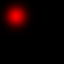}} 
	{\includegraphics[width=.1\textwidth]{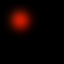}} 
	{\includegraphics[width=.1\textwidth]{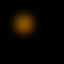}} 
	{\includegraphics[width=.1\textwidth]{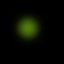}} 
	{\includegraphics[width=.1\textwidth]{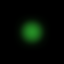}} 
	{\includegraphics[width=.1\textwidth]{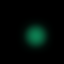}} 
	{\includegraphics[width=.1\textwidth]{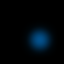}} 
	{\includegraphics[width=.1\textwidth]{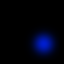}} 
	{\includegraphics[width=.1\textwidth]{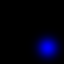}} 
	\\[1.5ex]
	{\includegraphics[width=.1\textwidth]{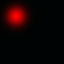}} 
	{\includegraphics[width=.1\textwidth]{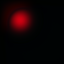}} 
	{\includegraphics[width=.1\textwidth]{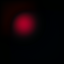}} 
	{\includegraphics[width=.1\textwidth]{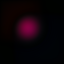}} 
	{\includegraphics[width=.1\textwidth]{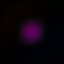}} 
	{\includegraphics[width=.1\textwidth]{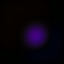}} 
	{\includegraphics[width=.1\textwidth]{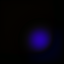}} 
	{\includegraphics[width=.1\textwidth]{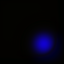}} 
	{\includegraphics[width=.1\textwidth]{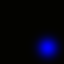}}
	\caption{\label{fig:boundary}
		Dynamic optimal transport between a red Gaussian and a blue one by the constrained model \protect\eqref{model_disc} with different boundary conditions
		for the third (RGB) dimension. The initial and final images  have the same mass.
		Top: mirror boundary conditions, bottom: periodic boundary conditions. 
			}\label{Fig:boundary_color}
\end{figure*}
\begin{figure*} \centering
	{\includegraphics[width=.1\textwidth]{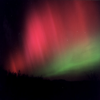}} 
	{\includegraphics[width=.1\textwidth]{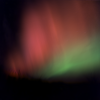}} 
	{\includegraphics[width=.1\textwidth]{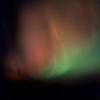}} 
	{\includegraphics[width=.1\textwidth]{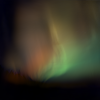}} 
	{\includegraphics[width=.1\textwidth]{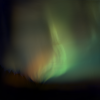}} 
	{\includegraphics[width=.1\textwidth]{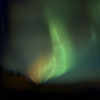}} 
	{\includegraphics[width=.1\textwidth]{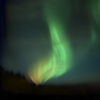}} 
	{\includegraphics[width=.1\textwidth]{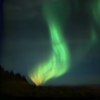}} 
	{\includegraphics[width=.1\textwidth]{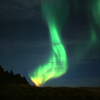}} \\[1.5ex]
		{\includegraphics[width=.1\textwidth]{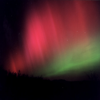}} 
		{\includegraphics[width=.1\textwidth]{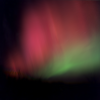}} 
		{\includegraphics[width=.1\textwidth]{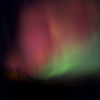}} 
		{\includegraphics[width=.1\textwidth]{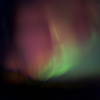}} 
		{\includegraphics[width=.1\textwidth]{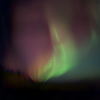}} 
		{\includegraphics[width=.1\textwidth]{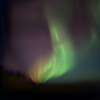}} 
		{\includegraphics[width=.1\textwidth]{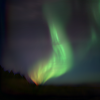}} 
		{\includegraphics[width=.1\textwidth]{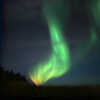}} 
		{\includegraphics[width=.1\textwidth]{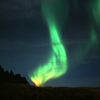}}
	\caption{Dynamic optimal transport between two polar lights by the constrained model \protect\eqref{model_disc} with different boundary conditions
		for the third (RGB) dimension. The initial and final images  have the same mass.
		Top: mirror boundary conditions, bottom: periodic boundary conditions. 
				}\label{Fig:boundary_echt}
\end{figure*}%
{ Further examples of the constrained model~\eqref{model_disc} for several Gaussians and real images are given in 
Figures~\ref{Fig:intensity_transitions} and~\ref{Fig:RGB_transport}\footnote{Images from Wikimedia Commons: 				
				Europe\_satellite\_orthographic.jpg and Earthlights\_2002.jpg 
				by 
				NASA,
				K\"ohlbrandbr\"ucke5478.jpg by G.~Ries under CC BY SA 2.5, 
				K\"ohlbrandbr\"ucke.jpg by HafenCity1 under CC BY 3.0.}.
The first row in Figures~\ref{Fig:intensity_transitions} shows the transport of a red and a yellow Gaussian into a cyan and a blue Gaussian. The red and the blue Gaussian are spatially more extended compared to the yellow and the cyan one, but due to the fact that yellow and cyan have higher intensities, the masses of the red and blue Gaussians are approximately the same those of the yellow respective the cyan ones. 
This results in a very low interaction between the Gaussians during the transport, which is sightly visible in the background. 
Mainly, the red Gaussian is transported via a light red into cyan and the yellow Gaussian is transported over violet to blue. 	
The next row displays the intensity of the transported color images $\frac{1}{3}(R+G+B)$, in contrast to the (two-dimensional) transported intensity images in the third row. 
One sees  slight differences which arise due to the fact that the small mass difference can be transported only spatially and not through the color channels.\\
		The experiment is repeated in the fourth until sixth row, but this time the yellow and the cyan Gaussian are spatially more extended, thus having a significantly higher mass compared to the red respective the blue Gaussian. As a consequence, the interaction during the transport is higher, which is also clearly visible in the corresponding intensity images (fifth row). The color of the red Gaussian is transported similar as before, while the yellow Gaussian splits into two parts, one of them changing (as before) over violet to blue, while the other one goes over a light green to cyan. Further, as the Gaussians do not only travel in space, but also in color direction, the results are slightly smoother compared to the two-dimensional intensity transport, shown in the last row. }
			 \\					
			\begin{figure*}	\centering	{\includegraphics[width=.10\textwidth]{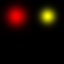}} 
				{\includegraphics[width=.10\textwidth]{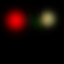}} 
				{\includegraphics[width=.10\textwidth]{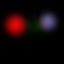}} 
				{\includegraphics[width=.10\textwidth]{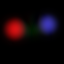}} 
				{\includegraphics[width=.10\textwidth]{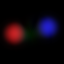}} 
				{\includegraphics[width=.10\textwidth]{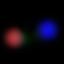}} 
				{\includegraphics[width=.10\textwidth]{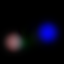}} 
				{\includegraphics[width=.10\textwidth]{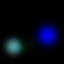}} 
				{\includegraphics[width=.10\textwidth]{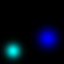}}				\\[1.5ex] 	
				{\includegraphics[width=.10\textwidth]{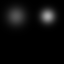}} 
				{\includegraphics[width=.10\textwidth]{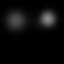}} 
				{\includegraphics[width=.10\textwidth]{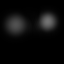}} 
				{\includegraphics[width=.10\textwidth]{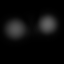}} 
				{\includegraphics[width=.10\textwidth]{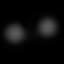}} 
				{\includegraphics[width=.10\textwidth]{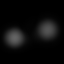}} 
				{\includegraphics[width=.10\textwidth]{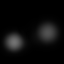}} 
				{\includegraphics[width=.10\textwidth]{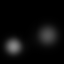}} 
				{\includegraphics[width=.10\textwidth]{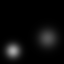}}	\\[1.5ex] 
				{\includegraphics[width=.10\textwidth]{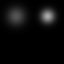}} 
				{\includegraphics[width=.10\textwidth]{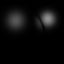}} 
				{\includegraphics[width=.10\textwidth]{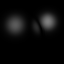}} 
				{\includegraphics[width=.10\textwidth]{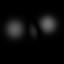}} 
				{\includegraphics[width=.10\textwidth]{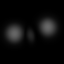}} 
				{\includegraphics[width=.10\textwidth]{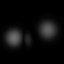}} 
				{\includegraphics[width=.10\textwidth]{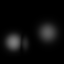}} 
				{\includegraphics[width=.10\textwidth]{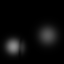}} 
				{\includegraphics[width=.10\textwidth]{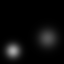}}	\\[1.5ex] 
				{\includegraphics[width=.10\textwidth]{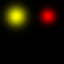}} 
				{\includegraphics[width=.10\textwidth]{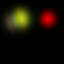}} 
				{\includegraphics[width=.10\textwidth]{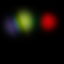}} 
				{\includegraphics[width=.10\textwidth]{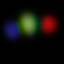}} 
				{\includegraphics[width=.10\textwidth]{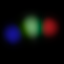}} 
				{\includegraphics[width=.10\textwidth]{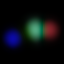}} 
				{\includegraphics[width=.10\textwidth]{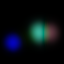}} 
				{\includegraphics[width=.10\textwidth]{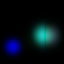}} 
				{\includegraphics[width=.10\textwidth]{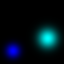}}		 \\[1.5ex] 
				{\includegraphics[width=.10\textwidth]{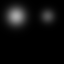}} 
				{\includegraphics[width=.10\textwidth]{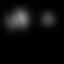}} 
				{\includegraphics[width=.10\textwidth]{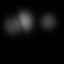}} 
				{\includegraphics[width=.10\textwidth]{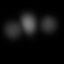}} 
				{\includegraphics[width=.10\textwidth]{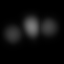}} 
				{\includegraphics[width=.10\textwidth]{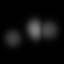}} 
				{\includegraphics[width=.10\textwidth]{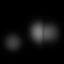}} 
				{\includegraphics[width=.10\textwidth]{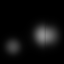}} 
				{\includegraphics[width=.10\textwidth]{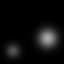}} \\[1.5ex] 	
				{\includegraphics[width=.10\textwidth]{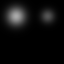}} 
				{\includegraphics[width=.10\textwidth]{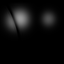}} 
				{\includegraphics[width=.10\textwidth]{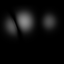}} 
				{\includegraphics[width=.10\textwidth]{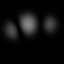}} 
				{\includegraphics[width=.10\textwidth]{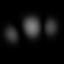}} 
				{\includegraphics[width=.10\textwidth]{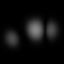}} 
				{\includegraphics[width=.10\textwidth]{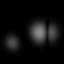}} 
				{\includegraphics[width=.10\textwidth]{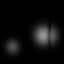}} 
				{\includegraphics[width=.10\textwidth]{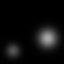}}	\\[1.5ex] 			
				\caption{Example for different color transitions obtained with the constrained model \protect\eqref{model_disc} 
				and periodic boundary conditions (first and fourth row), where the initial and final color images have the same mass. The second and fifth row show the corresponding intensity images, while the third and sixth row give the results obtained using two-dimensional transport of the initial and the final intensity images.}\label{Fig:intensity_transitions}
			\end{figure*}			
Also for the real images, we assume that the images have the same mass.
Indeed, the initial and final images had approximately the same overall sum of values, 
so that our normalization had no significant effect.  
 In the first row of Figure~\ref{Fig:RGB_transport}, 
a topographic map of Europe is transported into a satellite image of Europe at night. 
The second row displays the transport between two images of the 
K\"ohlbrandbr\"ucke in Hamburg. 
In both cases one nicely sees a continuous change of color and shape during the transport.\\
\begin{figure*} \centering
{\includegraphics[width=.10\textwidth]{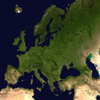}} 
{\includegraphics[width=.10\textwidth]{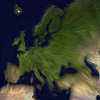}} 
{\includegraphics[width=.10\textwidth]{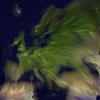}} 
{\includegraphics[width=.10\textwidth]{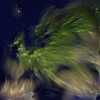}} 
{\includegraphics[width=.10\textwidth]{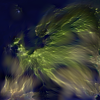}} 
{\includegraphics[width=.10\textwidth]{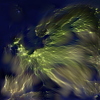}} 
{\includegraphics[width=.10\textwidth]{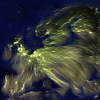}} 
{\includegraphics[width=.10\textwidth]{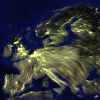}} 
{\includegraphics[width=.10\textwidth]{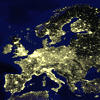}} \\[1.5ex]
{\includegraphics[width=.10\textwidth]{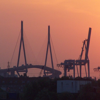}} 
{\includegraphics[width=.10\textwidth]{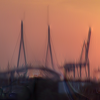}} 
{\includegraphics[width=.10\textwidth]{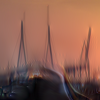}} 
{\includegraphics[width=.10\textwidth]{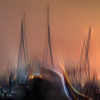}} 
{\includegraphics[width=.10\textwidth]{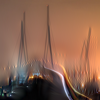}} 
{\includegraphics[width=.10\textwidth]{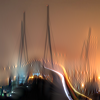}} 
{\includegraphics[width=.10\textwidth]{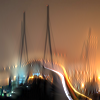}} 
{\includegraphics[width=.10\textwidth]{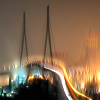}} 
{\includegraphics[width=.10\textwidth]{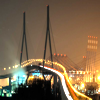}} 
\caption{\label{Fig:RGB_transport}
	Dynamic optimal transport between RGB images by the constrained model \protect\eqref{model_disc} with periodic boundary conditions. 
	The initial and final images  have the same mass.
	 }
\end{figure*}
In Figure~\ref{Fig:color_transitions} we give further examples for the transport of several Gaussians which may have different shapes 
in order to illustrate the transport of color and shape. Here, the initial and final images have different masses. {The first row shows the transport of a yellow and a red Gaussian placed at the top of the image into a green and a blue Gaussian placed at the bottom. At this point, the yellow and the blue Gaussian are slightly more spatially extended compared to the red respective the green one. The red Gaussian changes over violet to blue. The yellow Gaussian, however, splits into two parts. While the main part is transported to green, a small part is separated and changes over orange to blue.
In the second row, again a red and a yellow Gaussian are transported from the top into a green and a blue Gaussian at the bottom, but this time the yellow and the green Gaussian are spatially more extended. Additionally, the Gaussians are no longer isotropic but have an ellipsoidal shape. In this case, additionally to the color also the shape changes continuously during time. 
	Finally, the third row displays the transport of a white Gaussian into a yellow one. It shows that there appear no artificial colors during the transport, but the color transition proceeds as one may expect when looking at the RGB cube.   \\
\begin{figure*}	\centering
	{\includegraphics[width=.10\textwidth]{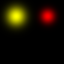}} 
	{\includegraphics[width=.10\textwidth]{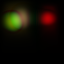}} 
	{\includegraphics[width=.10\textwidth]{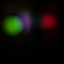}} 
	{\includegraphics[width=.10\textwidth]{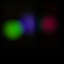}} 
	{\includegraphics[width=.10\textwidth]{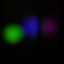}} 
	{\includegraphics[width=.10\textwidth]{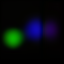}} 
	{\includegraphics[width=.10\textwidth]{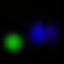}} 
	{\includegraphics[width=.10\textwidth]{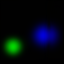}} 
	{\includegraphics[width=.10\textwidth]{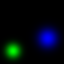}}				\\[1.5ex]
	{\includegraphics[width=.10\textwidth]{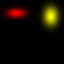}} 
	{\includegraphics[width=.10\textwidth]{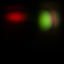}} 
	{\includegraphics[width=.10\textwidth]{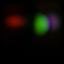}} 
	{\includegraphics[width=.10\textwidth]{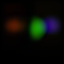}} 
	{\includegraphics[width=.10\textwidth]{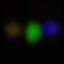}} 
	{\includegraphics[width=.10\textwidth]{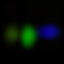}} 
	{\includegraphics[width=.10\textwidth]{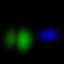}} 
	{\includegraphics[width=.10\textwidth]{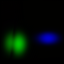}} 
	{\includegraphics[width=.10\textwidth]{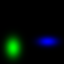}}\\[1.5ex]
	{\includegraphics[width=.10\textwidth]{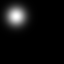}} 
	{\includegraphics[width=.10\textwidth]{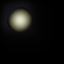}} 
	{\includegraphics[width=.10\textwidth]{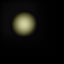}} 
	{\includegraphics[width=.10\textwidth]{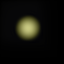}} 
	{\includegraphics[width=.10\textwidth]{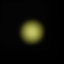}} 
	{\includegraphics[width=.10\textwidth]{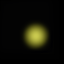}} 
	{\includegraphics[width=.10\textwidth]{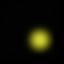}} 
	{\includegraphics[width=.10\textwidth]{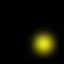}} 				
	{\includegraphics[width=.10\textwidth]{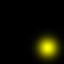}}		
	\caption{Example for different color and shape transitions by the constrained model \protect\eqref{model_disc} with periodic boundary conditions.
		The initial and final images do not have the same mass.}\label{Fig:color_transitions}
\end{figure*}
Next we compare our approach with the approach of Rabin et al.~\cite{RPDB12} for microtextures,
 which is to the best of our knowledge the only approach that extends the dynamic optimal transport problem to a  special class of color images. 
Note, however, that their approach is completely different from ours and works only for microtextures. 
At this point, microtextures are textures that fulfill the assumption of being robust
towards phase randomization, in contrast to macrotextures, which usually contain periodic patterns with big visible elements (such as
brick walls) or - more generally - the elements that form the texture-pattern are spatially
arranged, see e.g.~\cite{GGM11}. Based on the fact  that microtextures can be well modeled as multivariate Gaussian distributions 
the authors of~\cite{RPDB12} propose to compute geodesics with respect to the Wasserstein distance $W_2$ between the Gaussian distributions that are estimated from the input textures $f_0$ and $f_1$. This approach has the advantage that there exist closed-form solutions for the dynamic optimal transport between Gaussian measures. However, it is limited to the special class of microtextures, as natural images are not robust towards a randomization of their Fourier phase. In Figure~\ref{Fig:Microtexturemodel} 
we compare the results of our approach  with the one for microtextures. In the case of microtextures both approaches yield similar results. 
Note that the approach of Rabin et al.~\cite{RPDB12} may fail for images which are orthogonal at some frequencies in Fourier domain.  
The second example demonstrates  that the microtexture technique~\cite{RPDB12}  fails for natural images which possess contours and edges. \\
\begin{figure*} \centering
	{\includegraphics[width=.10\textwidth]{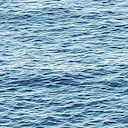}} 
	{\includegraphics[width=.10\textwidth]{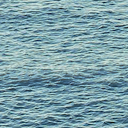}} 
	{\includegraphics[width=.10\textwidth]{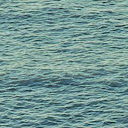}} 
	{\includegraphics[width=.10\textwidth]{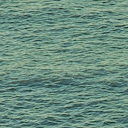}} 
	{\includegraphics[width=.10\textwidth]{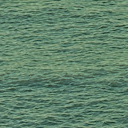}} 
	{\includegraphics[width=.10\textwidth]{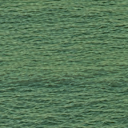}} 
	{\includegraphics[width=.10\textwidth]{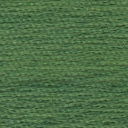}} 
	{\includegraphics[width=.10\textwidth]{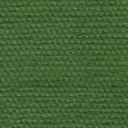}} 
	{\includegraphics[width=.10\textwidth]{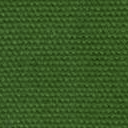}} \\[1.5ex]	
	{\includegraphics[width=.10\textwidth]{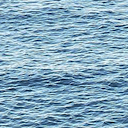}} 
	{\includegraphics[width=.10\textwidth]{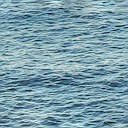}} 
	{\includegraphics[width=.10\textwidth]{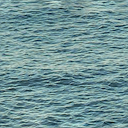}} 
	{\includegraphics[width=.10\textwidth]{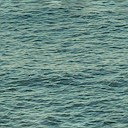}} 
	{\includegraphics[width=.10\textwidth]{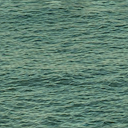}} 
	{\includegraphics[width=.10\textwidth]{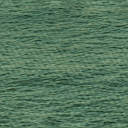}} 
	{\includegraphics[width=.10\textwidth]{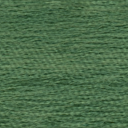}} 
	{\includegraphics[width=.10\textwidth]{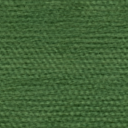}} 
	{\includegraphics[width=.10\textwidth]{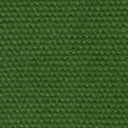}} 
 \\[1.5ex]
	{\includegraphics[width=.1\textwidth]{images/Polar_1.png}} 
	{\includegraphics[width=.1\textwidth]{images/Polar_5.png}} 
	{\includegraphics[width=.1\textwidth]{images/Polar_9.png}} 
	{\includegraphics[width=.1\textwidth]{images/Polar_13.png}} 
	{\includegraphics[width=.1\textwidth]{images/Polar_17.png}} 
	{\includegraphics[width=.1\textwidth]{images/Polar_21.png}} 
	{\includegraphics[width=.1\textwidth]{images/Polar_25.png}} 
	{\includegraphics[width=.1\textwidth]{images/Polar_29.png}} 
	{\includegraphics[width=.1\textwidth]{images/Polar_33.png}}  \\[1.5ex]
		{\includegraphics[width=.10\textwidth]{images/Polar_1.png}} 
		{\includegraphics[width=.10\textwidth]{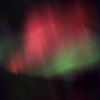}} 
		{\includegraphics[width=.10\textwidth]{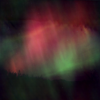}} 
		{\includegraphics[width=.10\textwidth]{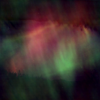}} 
		{\includegraphics[width=.10\textwidth]{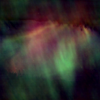}} 
		{\includegraphics[width=.10\textwidth]{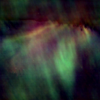}} 
		{\includegraphics[width=.10\textwidth]{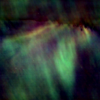}} 
		{\includegraphics[width=.10\textwidth]{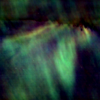}} 
		{\includegraphics[width=.10\textwidth]{images/Polar_33.png}}
	\caption{\label{Fig:Microtexturemodel}
	Comparison of our constrained model \protect\eqref{model_disc} with periodic boundary conditions with the
	approach in ~\cite{RPDB12} for microtextures and the polar lights.
	In both cases, our	RGB model is on top of the series and microtexture model is at the bottom. }
\end{figure*}
Next, we turn to the penalized model~\eqref{model_disc_pen}. 
Figure~\ref{Fig:lambda} shows the influence of 
the regularization parameter $\lambda$ when transporting a red Gaussian into a yellow one. Here, the initial and the final image have 
significantly different mass. The images are 
	displayed at intermediate timepoints $t = \frac{i}{8}, i = 0,\dots, 
	8$.
The results change for increasing $\lambda$ 
from a   nearly linear interpolation of the images to a transport of the mass. 
Further, for large $\lambda$ the results approach the one obtained with the constrained model~\eqref{model_disc}, which is reasonable.\\
\begin{figure*} \centering
	{\includegraphics[width=.10\textwidth]{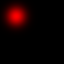}} 
	{\includegraphics[width=.10\textwidth]{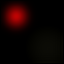}} 
	{\includegraphics[width=.10\textwidth]{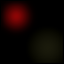}} 
	{\includegraphics[width=.10\textwidth]{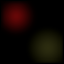}}
	{\includegraphics[width=.10\textwidth]{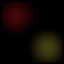}}
	{\includegraphics[width=.10\textwidth]{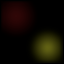}} 
	{\includegraphics[width=.10\textwidth]{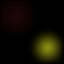}}  
	{\includegraphics[width=.10\textwidth]{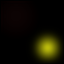}}  
	{\includegraphics[width=.10\textwidth]{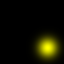}} \\[1.5ex]
	{\includegraphics[width=.10\textwidth]{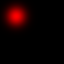}} 
	{\includegraphics[width=.10\textwidth]{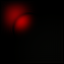}} 
	{\includegraphics[width=.10\textwidth]{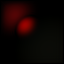}} 
	{\includegraphics[width=.10\textwidth]{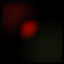}}
	{\includegraphics[width=.10\textwidth]{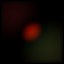}}
	{\includegraphics[width=.10\textwidth]{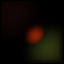}} 
	{\includegraphics[width=.10\textwidth]{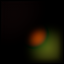}}  
	{\includegraphics[width=.10\textwidth]{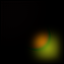}}  
	{\includegraphics[width=.10\textwidth]{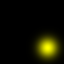}} \\[1.5ex]
	{\includegraphics[width=.10\textwidth]{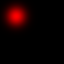}} 
	{\includegraphics[width=.10\textwidth]{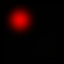}} 
	{\includegraphics[width=.10\textwidth]{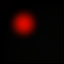}} 
	{\includegraphics[width=.10\textwidth]{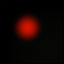}}
	{\includegraphics[width=.10\textwidth]{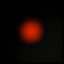}}
	{\includegraphics[width=.10\textwidth]{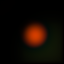}} 
	{\includegraphics[width=.10\textwidth]{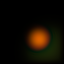}}  
	{\includegraphics[width=.10\textwidth]{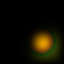}}  
	{\includegraphics[width=.10\textwidth]{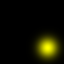}} \\[1.5ex]  
	{\includegraphics[width=.10\textwidth]{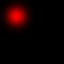}} 
	{\includegraphics[width=.10\textwidth]{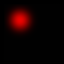}} 
	{\includegraphics[width=.10\textwidth]{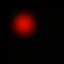}} 
	{\includegraphics[width=.10\textwidth]{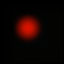}}
	{\includegraphics[width=.10\textwidth]{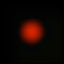}}
	{\includegraphics[width=.10\textwidth]{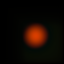}} 
	{\includegraphics[width=.10\textwidth]{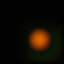}}  
	{\includegraphics[width=.10\textwidth]{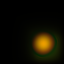}}  
	{\includegraphics[width=.10\textwidth]{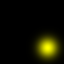}}  
	\\[1.5ex]  
	{\includegraphics[width=.10\textwidth]{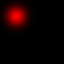}} 
	{\includegraphics[width=.10\textwidth]{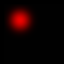}} 
	{\includegraphics[width=.10\textwidth]{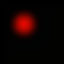}} 
	{\includegraphics[width=.10\textwidth]{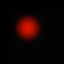}}
	{\includegraphics[width=.10\textwidth]{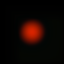}}
	{\includegraphics[width=.10\textwidth]{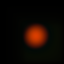}} 
	{\includegraphics[width=.10\textwidth]{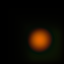}}  
	{\includegraphics[width=.10\textwidth]{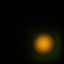}}  
	{\includegraphics[width=.10\textwidth]{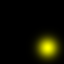}}  	
	\caption{Comparison of penalized and constrained color optimal transport (from top to bottom): 
	penalized optimal transport for different regularization parameters $\lambda\in \{0.1,1,10,100\}$ and constrained optimal transport. 
	}\label{Fig:lambda}
\end{figure*}
Finally, we consider the influence of the parameter $p \in (1,2]$. 
The corresponding results for our penalized mo\-del~\eqref{model_disc_pen} are given in Figures~\ref{Fig:color_echt2} and~\ref{Fig:diffP}. 
In all our experiments we observed only rather small differences.
Note however that in~\cite{COO14} Wasserstein barycenters were considered
for $p=1,2,3$ which show significant differences.\\
\begin{figure*}\centering	
	{\includegraphics[width=.19\textwidth]{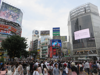}} 
	{\includegraphics[width=.19\textwidth]{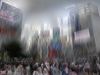}} 
	{\includegraphics[width=.19\textwidth]{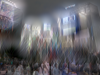}} 
	{\includegraphics[width=.19\textwidth]{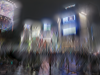}} 
	{\includegraphics[width=.19\textwidth]{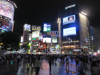}} 
	\\[1.5ex] 	
			{\includegraphics[width=.19\textwidth]{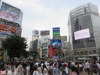}} 
			{\includegraphics[width=.19\textwidth]{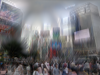}} 
			{\includegraphics[width=.19\textwidth]{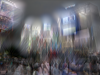}} 
			{\includegraphics[width=.19\textwidth]{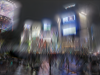}} 
			{\includegraphics[width=.19\textwidth]{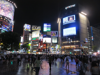}} 
\caption{Dynamic optimal transport of RGB images using the penalized model \protect\eqref{model_disc_pen} with periodic boundary conditions.
Comparison of $p=2$ (top) and $p=1.5$ (bottom) for $\lambda = 1$.
 The images are displayed at intermediate timepoints $t = \frac{i}{4}, i = 0,\dots, 4$.}
\label{Fig:color_echt2} 
\end{figure*}
\begin{figure*}
	 \centering
	{\includegraphics[width=.18\textwidth]{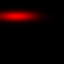}}
	{\includegraphics[width=.18\textwidth]{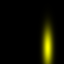}}
	{\includegraphics[width=.18\textwidth]{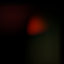}}
	{\includegraphics[width=.18\textwidth]{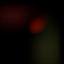}}
	{\includegraphics[width=.18\textwidth]{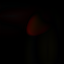}}
	\caption{\label{Fig:diffP}
		Dynamic optimal transport of RGB images using the penalized model \protect\eqref{model_disc_pen} with periodic boundary conditions and
		$\lambda=1$ for $p=2$ and 
		$p=1.5$. From left to right: Initial images $f_0$, $f_1$, result for 
		$p=2$ and $p=1.5$ at time $t=0.5$ and the absolute difference between 
		the two results.}
\end{figure*}
Further examples and videos, in particular for real images, 
can be found on our website { \small
  \url{http://www.mathematik.uni-kl.de/imagepro/members/laus/color-OT}}.

\section{Summary and Conclusions} \label{sec:conclusions}
Our contribution can be summarized as follows:
\begin{itemize}
	\item[i)]  
	We propose  two discrete  variational models for the interpolation of RGB color images 
	based on the dynamic optimal transport approach. To this end, we consider color images as
	three-dimensional objects, where the ``RGB direction'' is handled in a periodic way.
	We focus on a discrete matrix-vector approach.
	\item[ii)] Our first model relaxes the continuity constraint so that a transport between images of different mass
	is possible, while the second model allows even more flexibility by just penalizing the continuity constraint
	with different regularization parameters.
	\item[iii)] We provided an existence proof and a brief discussion on the uniqueness of the minimizer.
	\item[iv)]  Interestingly, the step in the chosen primal-dual algorithm which takes the continuity constraint into account requires the solution
	of four-dimensional Poisson equations with simultaneous  mirror/periodic (constrained model) or zero/mirror/periodic boundary conditions (penalized model). 
	Here, fast sine, cosine and Fourier transforms come into the play.
	\item[v)] 
	We consider the case $p\in (1,2]$ and
	give a careful analysis of the proximal mapping of $J_p$, $p \in (1,2]$. 
	This includes the determination of a starting point for the
	Newton algorithm to ensure its quadratic convergence and  a stable performance of the overall algorithm.
	\item[vi)] We show numerous numerical examples. 
\end{itemize}
There are several directions for future work.
\begin{enumerate}
\item[1)]
One possibility is to add several additional priors.
 So far, the present model has difficulties to transfer sharp contours.
A remedy could be the penalization of a total variation (TV) term with respect to $f$, which results for some $\gamma>0$ in the functional
\begin{equation}
\argmin_{ (m,f) \in {\cal C}}  \big\{ \| J_p(S_{\rm m} m, S_{\rm f} f + f^+) \|_1 + \gamma {\rm TV}(f) \big\}.\label{TV_functional}
\end{equation}
In the following we use a spatial TV term, summed over time, i.e., in one dimension
\begin{align}
	{\rm TV}(f) = \| (I_P \otimes D_N^\tT) f \|_1.
\end{align}
Figure~\ref{Fig:TVbump} shows the performance of such a model in the one-dimensional case.
Here, the approach without the TV term leads to some overshooting and blurring at the edges.
With the TV term the transport takes place in a more natural way, in particular the sharp edges are preserved.
\begin{figure}
\centering
\includegraphics[width=.45\textwidth]{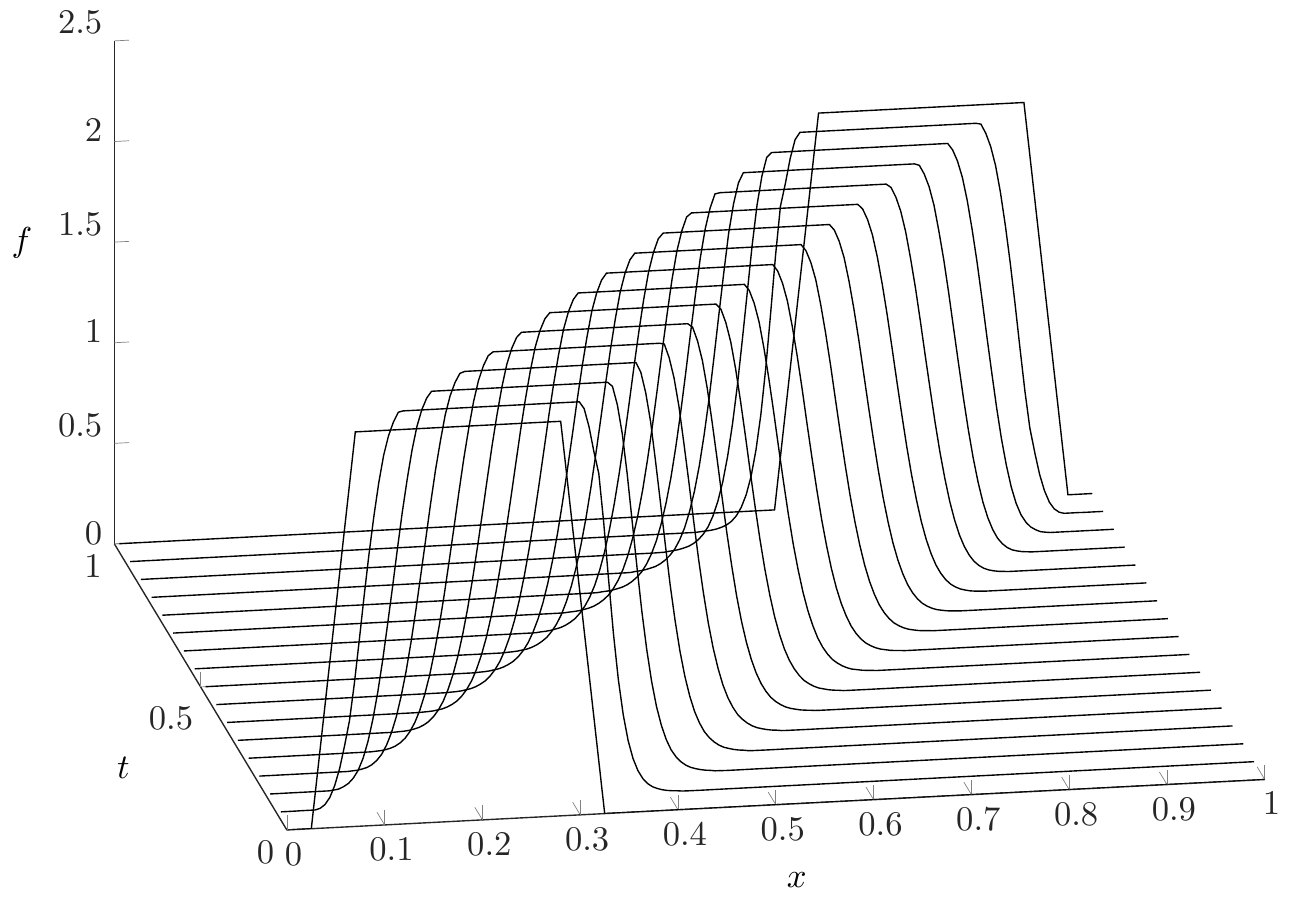}
\includegraphics[width=.45\textwidth]{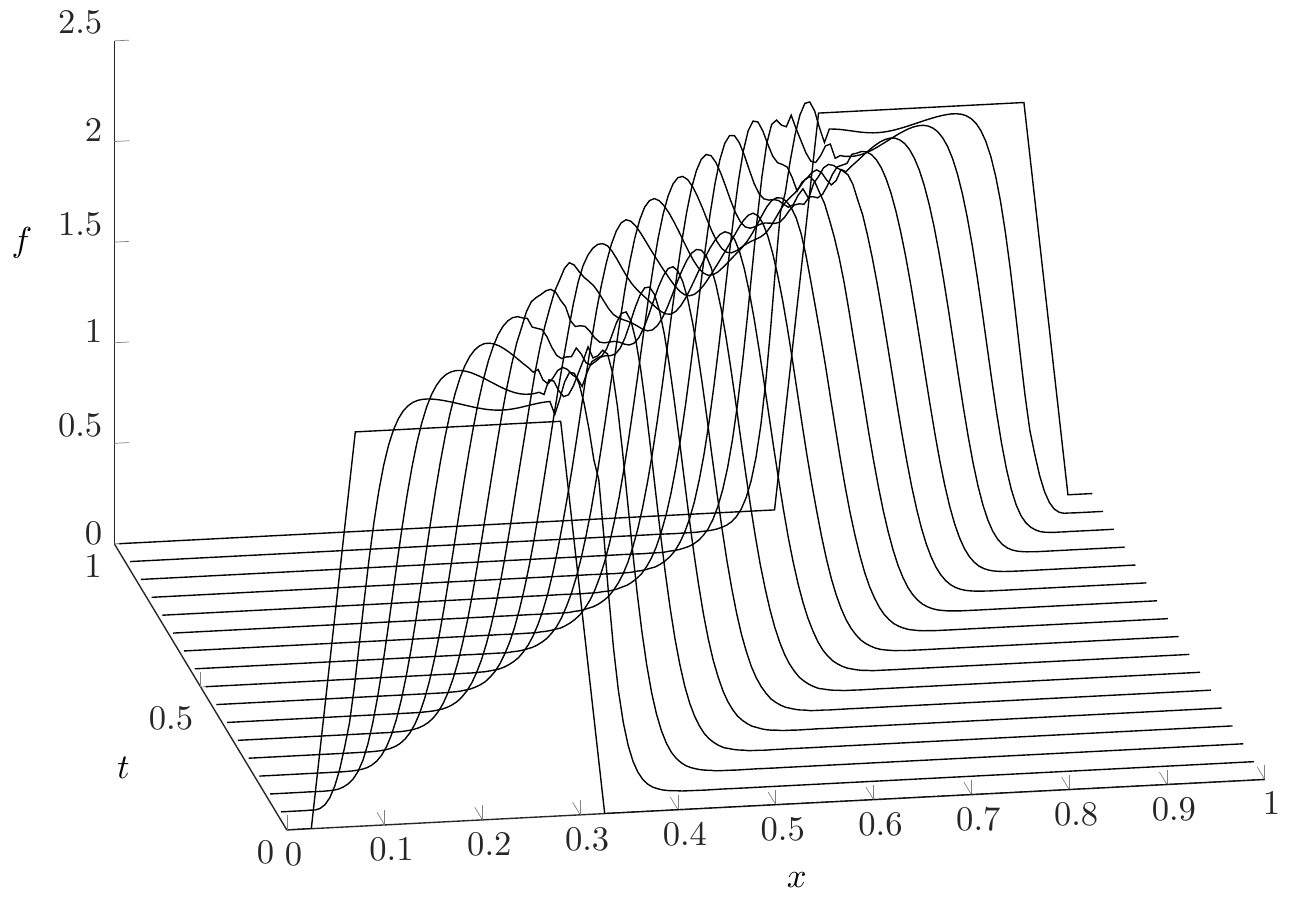}
\caption{\label{Fig:TVbump} Transport of a one-dimensional signal with sharp edges using a TV penalized functional \protect\eqref{TV_functional} (left), 
where $\gamma = 0.03$ and the result without TV regularization (right).}
\label{fig:TVbump}
\end{figure}
For the one-dimensional example this works well.
However, in higher dimensions one has to be more careful.
In Figure~\ref{Fig:TVbump_2D} a comparison of an isotropic and an an\-iso\-tropic TV regularizer is shown.
The isotropic one leads, as could be expected, to a rounding of the corners.
The anisotropic regularizer prefers horizontal and vertical edges. In this way, the shape of the object is preserved during the transport. Note that due to the smeared boundary the square appears to be smaller. 
To preserve the shape of  arbitrary transported objects, one would have to adjust the regularizer according to the direction of the edges.\\
The idea of penalizing TV terms for the transport can be found 
for gray-value images, e.g.\ in~\cite{Bru10,MRSS14}.
For image denoising a Wasserstein-TV model was successfully applied in \cite{BFS12,SS13x,BCDS13}.
\begin{figure*}
\centering
\includegraphics[width=.18\textwidth]{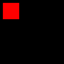}
\includegraphics[width=.18\textwidth]{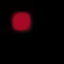}
\includegraphics[width=.18\textwidth]{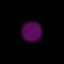}
\includegraphics[width=.18\textwidth]{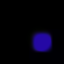}
\includegraphics[width=.18\textwidth]{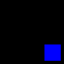}\\[1.5ex]
\includegraphics[width=.18\textwidth]{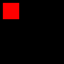}
\includegraphics[width=.18\textwidth]{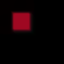}
\includegraphics[width=.18\textwidth]{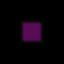}
\includegraphics[width=.18\textwidth]{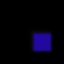}
\includegraphics[width=.18\textwidth]{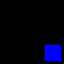}\\[1.5ex]
\includegraphics[width=.18\textwidth]{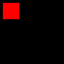}
\includegraphics[width=.18\textwidth]{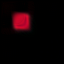}
\includegraphics[width=.18\textwidth]{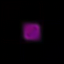}
\includegraphics[width=.18\textwidth]{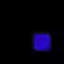}
\includegraphics[width=.18\textwidth]{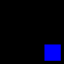}
\caption{\label{Fig:TVbump_2D} Transport of a two-dimensional square with sharp edges using a TV penalized functional, where $\gamma = 0.05$. First row: isotropic TV, second row: anisotropic TV, third row: no TV regularization.}
\label{fig:TVbump_2D}
\end{figure*}

\item[2)]
Using a barycentric approach the interpolation of microtextures in \cite{RPDB12} works also between more than two images.
So far this task can not be handled via the dynamic optimal transport approach.
One idea might it be to formulate a dynamic barycenter optimal transport problem or to use a multimarginal model, see e.g.\ Section 1.4.7. in~\cite{San15} and the references therein.
\end{enumerate}

	{\bf Acknowledgement:}
	Funding by the DFG within the Research Training Group 1932 is gratefully 
	acknow\-ledged.

\appendix
\section{Diagonalization of Structured Matrices} \label{sec:appA}
In the following we collect known facts on the eigenvalue decomposition 
of various difference matrices. For further information we refer, e.g., to  \cite{PS98,SM14}.
The following matrices $F_n$, $C_n$ and $S_n$ are unitary, resp., orthogonal matrices.
The Fourier matrix 
$$
F_n \vcentcolon= \sqrt{\tfrac{1}{n}} \left( \mathrm{e}^{\frac{-2\pi \imag jk}{n}} \right)_{j,k=0}^n
$$
diagonalizes circulant matrices, i.e., 
for $a \vcentcolon= (a_j)_{j=0}^{n-1} \in \mathbb R^n$ we have
\begin{align}\label{circ}
{\small
\left(
\begin{array}{llll}
a_0&a_{n-1}& \ldots& a_1\\
a_1&a_0&\ldots&a_2\\
\vdots & & \ddots& \vdots\\
a_{n-1} &a_1&\ldots&a_0
\end{array}
\right) 
}
&= \bar F_n \diag (\sqrt{n} F_n a) F_n\\& = F_n \diag (\sqrt{n} \, \bar F_n a) \bar F_n.
\end{align}
In particular it holds
\begin{align} \label{per}
\Delta_n^{\per} &\vcentcolon=  \frac{1}{n^2} (D_n^{\per})^\tT D_n^{\per} = \frac{1}{n^2} D_n^{\per} (D_n^{\per})^\tT\\
&=
{\scriptsize
\left(
\begin{array}{rrrrrrrr}
2 &-1&  &      &  &  &  &-1            \\
-1& 2&-1&                  \\
  &  &  &\ddots&  &        \\
  &  &  &      &  &-1&2 &-1 \\
-1&  &  &      &  &  &-1& 2
\end{array} 
\right)
}
= 
\bar F_n \diag (\dd^{\per}_n ) F_n 
\end{align}
with $\dd^{\per}_n \vcentcolon= \left( 4 \sin^2 \frac{k\pi}{n}\right)_{k=0}^{n-1}$.
The operator $\Delta_n^{\per}$ typically appears when solving  the one-dimensional Poisson equation with
periodic boundary conditions by finite difference methods.\\
The DST-I matrix
$$
S_{n-1} \vcentcolon= \sqrt{\tfrac{2}{n}} \left( \sin \frac{jk\pi}{n} \right)_{j,k=1}^{n-1}, 
$$
and the DCT-II matrix 
$$
C_n \vcentcolon= \sqrt{\tfrac{2}{n}} \left( \epsilon_j \cos\frac{j(2k+1)\pi}{2n} \right)_{j,k=0}^{n-1}
$$
with
$\epsilon_0 \vcentcolon= 1/\sqrt{2}$ and $\epsilon_j \vcentcolon= 1$, $j=1,\ldots,n-1$ 
are related by
\begin{align} \label{svd_diff}
D_n = S_{n-1} \left( 0 \, | \, \diag(\dd^{\zero}_{n-1})^\frac12 \right) C_n, 
\end{align}
where
$
\dd^{\zero}_{n-1} \vcentcolon= \left( 4 \sin^2 \frac{k\pi}{2n}\right)_{k=1}^{n-1}
$.
Further they diagonalize sums of certain symmetric Toeplitz and persymmetric Hankel matrices.
In particular it holds
\begin{align} \label{zero}
\Delta_{n-1}^{\zero} &\vcentcolon= \frac{1}{n^2} D_{n} D_{n} ^\tT \\
&=
{\scriptsize
\left(
\begin{array}{rrrrrrrr}
2 &-1&  &      &  &  &  &0 \\
-1& 2&-1&                  \\
  &  &  &\ddots&  &        \\
  &  &  &      &  &-1&2 &-1 \\
0&  &  &      &  &  &-1& 2
\end{array} 
\right) 
}
= 
S_{n-1} \diag(\dd^{\zero}_{n-1}) S_{n-1} 
\end{align}
and
\begin{align} \label{mirr}
\Delta_n^{\mirr} &\vcentcolon= \frac{1}{n^2} D_{n}^\tT D_{n} \\
&=
{\scriptsize
\left(
\begin{array}{rrrrrrrr}
1 &-1&  &      &  &  &  &0            \\
-1& 2&-1&                  \\
  &  &  &\ddots&  &        \\
  &  &  &      &  &-1&2 &-1 \\
0&  &  &      &  &  &-1& 1
\end{array} 
\right) 
}
= 
C_n^\tT \diag(\dd^{\mirr}_n) C_n 
\end{align}
with
$
\dd^{\mirr}_n \vcentcolon=  \begin{pmatrix} 0\\ \dd_{n-1}^{\zero} \end{pmatrix} = \left( 4\sin^2 \frac{j \pi}{2n} \right)_{j=0}^{n-1}
$.
The operators $\Delta_{n-1}^{\zero}$ and $\Delta_{n}^{\mirr}$ are related to the Poisson equation with zero boundary conditions
and mirror boundary conditions, respectively.
%
\section{Computation with Tensor Products} \label{sec:appB}
The tensor product (Kronecker product) of matrices
\begin{align*}
A
&=
\begin{pmatrix}
a_{1,1} & \cdots & a_{1,n}\\
\vdots & \cdots & \vdots \\
a_{m,1} & \cdots & a_{m,n}
\end{pmatrix}
\in \C^{m, n} \quad
\mathrm{and} \quad
B
=
\begin{pmatrix}
b_{1,1} & \cdots & b_{1,t}\\
\vdots & \cdots & \vdots \\
b_{s,1} & \cdots & b_{s,t}
\end{pmatrix}
\in \C^{s, t}
\end{align*}
is defined by 
\begin{equation*}
A \otimes  B
\vcentcolon=
\begin{pmatrix}
a_{1,1}  B & \cdots & a_{1,n} B\\
\vdots & \ddots & \vdots \\
a_{m,1} B & \cdots & a_{m,n} B
\end{pmatrix}
\in \mathbb C^{ms, nt}. 
\end{equation*}
The tensor product is associative and distributive with respect to the addition of matrices.
\begin{lemma}[Properties of Tensor Products]
${ }$
	\begin{enumerate}
		\item[{\rm i)}] $( A \otimes  B)^\tT =  A^\tT \otimes  B^\tT$ for $ A \in \C ^{m, n}$,
	$ B \in \C ^{s , t}$.
	
	Let $A,  C \in \C ^{m, m}$ and $ B,  D \in \C ^{n , n}$.
	Then the following holds:
	\item[{\rm ii)}] $( A \otimes  B)( C \otimes D) =  A  C \otimes  B  D$ for $A,  C \in \C ^{m, m}$ and $ B,  D \in \C ^{n , n}$.
	\item[{\rm iii)}] If $ A$ and $B$ are invertible, then  $ A \otimes  B$
		is also invertible and
		\begin{equation*}
		( A \otimes  B)^{-1} =  A^{-1} \otimes  B^{-1} \, .
		\end{equation*}
	\end{enumerate}
\end{lemma}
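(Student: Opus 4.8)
The plan is to establish the three identities by direct computation on the block structure of the Kronecker product, and then obtain (iii) as a formal consequence of (ii).

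For (i), I would write $A \otimes B$ as the $m \times n$ array of blocks whose $(i,j)$ block is $a_{i,j}B$. Transposing a block matrix interchanges block positions and transposes each individual block, so the $(j,i)$ block of $(A \otimes B)^\tT$ equals $(a_{i,j}B)^\tT = a_{i,j}B^\tT$. On the other hand, $A^\tT \otimes B^\tT$ has $(A^\tT)_{j,i}B^\tT = a_{i,j}B^\tT$ in its $(j,i)$ block. The two block matrices agree block by block, hence coincide.

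For (ii), with $A,C \in \C^{m,m}$ and $B,D \in \C^{n,n}$, I would multiply the two block matrices and track the $(i,k)$ block of the product:
\[
\sum_{j=1}^{m}(a_{i,j}B)(c_{j,k}D) = \Bigl(\sum_{j=1}^{m}a_{i,j}c_{j,k}\Bigr)BD = (AC)_{i,k}\,BD,
\]
where the scalars $a_{i,j}$ and $c_{j,k}$ may be pulled out of the matrix product. This is exactly the $(i,k)$ block of $AC \otimes BD$, so the two matrices are equal. The square-size hypotheses enter only to ensure that the block products $a_{i,j}B \cdot c_{j,k}D$ are defined and can be summed over the shared index $j$. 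For (iii), I would then invoke (ii) twice: $(A \otimes B)(A^{-1}\otimes B^{-1}) = (AA^{-1})\otimes(BB^{-1}) = I_m \otimes I_n$, and $I_m \otimes I_n = I_{mn}$ since it is the $m \times m$ block matrix with $I_n$ on the diagonal and zero blocks off it; the symmetric computation yields $(A^{-1}\otimes B^{-1})(A \otimes B) = I_{mn}$ as well, so $A \otimes B$ is invertible with the stated inverse.

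None of these steps is a genuine obstacle: the work is purely bookkeeping of block indices together with the observation that scalar factors commute past matrix products. If anything, the only point deserving a word is the square-size assumption in (ii), and hence in (iii), which is precisely what the statement assumes.
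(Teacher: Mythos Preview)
Your argument is correct and is the standard textbook verification of these Kronecker-product identities. Note, however, that the paper does not actually prove this lemma: it is stated in Appendix~B as a collection of well-known facts and used without further justification, so there is no ``paper's own proof'' to compare against. Your blockwise computation is exactly what one would supply if a proof were required.
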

The tensor product is needed to establish the connection between images and their vectorized versions, 
i.e., we consider images $F\in\R^{n_1\times n_2}$ columnwise reshaped as
\begin{equation}
f
: =
\text{vec}(F) \in \R^{n_1 n_2}.
\end{equation}
Then the following relation holds true:
\begin{equation}
\text{vec}(A F B^\tT) = (B \otimes A) f.\label{vec_mat}
\end{equation}

\section{Proofs and Generalization of the Tensor Product Approach to 3D}
\label{sec:appC}

%
\begin{proof}[Proof of Proposition \ref{moore-penrose-constr}.]
By definition of $A$ and using \eqref{mirr}, \eqref{per},  we obtain for periodic boundary conditions
\begin{align}
A A^\tT 
&= I_P \otimes (D_N^{\per})^\tT D_N^{\per} + D_P^\tT D_P \otimes I_N\\
&= I_P \otimes  N^2 \Delta_N^{\per} + P^2 \Delta_P^{\mirr} \otimes I_N\\
&= (C_P^\tT \otimes \bar F_N) \; \diag( I_P \otimes  N^2 \dd_N^{\per}
 +  P^2 \dd_P^{\mirr} \otimes I_N) \; (C_P \otimes F_N).
\end{align}
Similarly we get with \eqref{mirr} for mirror boundary conditions
\begin{align}
A A^\tT  
&= I_P \otimes D_{N-1}^\tT D_{N-1} + D_{P}^\tT D_{P} \otimes I_{N-1} \\
&= I_P \otimes  N^2 \Delta_{N-1}^{\mirr} + P^2 \Delta_P^{\mirr} \otimes 
I_{N-1}\\
&= (C_P^\tT \otimes C_{N-1}^\tT) \; {\rm diag}( I_P \otimes  N^2 
\dd_{N-1}^{\mirr}
+  P^2 \dd_{N-1}^{\mirr} \otimes I_{N-1}) \; (C_P \otimes C_{N-1})
\end{align}
which finishes the proof. 
\end{proof}
%
\begin{proof}[Proof of Proposition \ref{lem:schur}.]
By definition of $A$ we obtain
\begin{align*}
\lambda A^\tT A + \frac{1}{\tau} I &= 
\left(
\begin{array}{cc}
\lambda D_{\rm m}^\tT D_{\rm m} + \frac{1}{\tau} I&\lambda D_{\rm m}^\tT D_{\rm f}\\
\lambda D_{\rm f}^\tT D_{\rm m}&\lambda D_{\rm f}^\tT D_{\rm f}  + \frac{1}{\tau} I
\end{array}
\right)
=\vcentcolon
\left(
\begin{array}{cc}
X     & Y\\
Y^\tT & Z
\end{array} 
\right)
\end{align*}
so that the inverse can be written by the help of the Schur complement
$$
S \vcentcolon= Z - Y^\tT X^{-1} Y 
$$
as
$$
\left(
\begin{array}{cc}
X     & Y\\
Y^\tT& Z
\end{array}
\right)^{-1}
= 
\left(
\begin{array}{ll}
I    & - X^{-1}Y\\
0 & I
\end{array} 
\right)
\left(
\begin{array}{cc}
X^{-1}   & 0\\
0 & S^{-1}
\end{array} 
\right)
\left(
\begin{array}{cc}
I    & 0\\
-Y^\tT X^{-1}& I
\end{array} 
\right).
$$
By \eqref{per} and \eqref{zero} we have with $D \in \{D_N^{\per},D_N\}$ that
\begin{align}
X^{-1} &= \left(\lambda D_{\rm m}^\tT D_{\rm m} + \tfrac{1}{\tau} I\right)^{-1} 
= \left(I_P \otimes \lambda N^2 D D^\tT +\tfrac{1}{\tau} I\right)^{-1}  \\
&= I_P \otimes (\lambda N^2 D D^\tT +\tfrac{1}{\tau} I)^{-1} \\
&= 
\begin{cases}
I_P \otimes S_{N-1} \diag(\lambda N^2 \dd_{N-1}^{\zero} + \tfrac{1}{\tau}) ^{-1} S_{N-1} & \text{mirror boundary},\vspace{0.25cm}\\
I_P \otimes F_N \diag(\lambda N^2 \dd_N^{\per} + \tfrac{1}{\tau}) ^{-1} \bar F_N
& \text{periodic boundary}.
\end{cases}
\end{align}
The Schur complement reads as
\begin{align}
S 
&= (\lambda D_{\rm f}^\tT D_{\rm f}  + \tfrac{1}{\tau} I) - \lambda^2 D_{\rm f}^\tT D_{\rm m} X^{-1} D_{\rm m}^\tT D_{\rm f}\\
&= (\lambda D_P D_P^\tT \otimes I_N + \tfrac{1}{\tau} I)  - 
\lambda^2 (D_P \otimes D^\tT) \big(  I_P \otimes (\lambda N^2 D D^\tT 
+\tfrac{1}{\tau} I)^{-1} \big) (D_P^\tT \otimes D)\\ 
&= (\lambda D_P D_P^\tT \otimes I_N + \tfrac{1}{\tau} I)  - 
\lambda^2 \big( D_P D_P^\tT \otimes D^\tT (\lambda D D^\tT + \tfrac{1}{\tau} I_N)^{-1} D \big)\\
&= \lambda D_P D_P^\tT \otimes \left(I_N - \lambda  D^\tT (\lambda D D^\tT + \tfrac{1}{\tau} I_N)^{-1} D \right) + \tfrac{1}{\tau} I.
\end{align}
By \eqref{circ} we have
$$(D_N^{\per})^\tT = N F_N \diag(-1 + \e^{+2\pi \imag k/N})_k \bar F_N$$
and 
$$D_N ^{\per}= N  F_N \diag(-1 + \e^{-2\pi \imag k/N})_k \bar F_N$$
so that we obtain for periodic boundary boundary conditions
\begin{align}
I_N - \lambda  (D_N^{\per})^\tT (\lambda D_N^{\per} (D_N^{\per})^\tT + \tfrac{1}{\tau} I_N)^{-1} D_N^{\per}
= F_N \diag \left(1+ \tau \tfrac{1}{\lambda N^2} \dd^{\per}_N \right)^{-1} \bar F_N.
\end{align}
Therewith it follows with \eqref{zero}
\begin{align}
S 
&=  
S_{P-1} \diag (\lambda P^2 \dd^{\zero}_{P-1}) S_{P-1}  \otimes F_N 
\diag \left(1+ \tau \tfrac{1}{\lambda N^2} \dd^{\per}_N \right) ^{-1} 
\bar F_N  + \tfrac{1}{\tau} I\\
&= 
(S_{P-1} \otimes F_N) 
\diag \big(\lambda P^2 \dd^{\zero}_{P-1} \otimes(1+ \tau \tfrac{1}{\lambda N^2} \dd^{\per}_N )^{-1} + \tfrac{1}{\tau} \big) 
(S_{P-1} \otimes \bar F_N)
\end{align}
which yields the assertion for $S^{-1}$ in the periodic case.\\
For mirror boundary conditions we compute using \eqref{svd_diff}
\begin{align}
S &=
(S_{P-1} \otimes C_N^\tT) 
\diag \big(\lambda P^2 \dd^{\zero}_{P-1} \otimes(1+ \tau \tfrac{1}{\lambda N^2} \dd^{\mirr}_N )^{-1} + \tfrac{1}{\tau}\big) 
(S_{P-1} \otimes C_N)
\end{align}
and inverting this matrix finishes the proof.
\end{proof}
\vspace{0.2cm}

\noindent
{\bf Discretization for three spatial dimensions + time}:
  For RGB images of size $N_1 \times N_2 \times N_3$, where $N_3 = 3$, we have to work in three  spatial dimensions.
Setting $N \coloneqq (N_1,N_2,N_3)$, $j \coloneqq (j_1,j_2,j_3)$ and defining 
the quotient $\tfrac{j}{N}$ componentwise we obtain
\begin{itemize}
\item $f_i = \left(f_i( \tfrac{j-1/2}{N}) \right)_{j=(1,1,1)}^{N} \in \mathbb R^{N_1,N_2,N_3}$, $i= 0,1$,
\item $f = \left( f( \tfrac{j-1/2}{N}, \tfrac{k}{P} ) \right)_{j={(1,1,1)},k=1}^{N,P-1} \in \mathbb R^{N_1,N_2,3,P-1}$,
\item $m = (m_1, m_2, m_3)$, with
\begin{align*}
	&\left(m_1(\tfrac{j_1}{N_1},\tfrac{j_2-1/2}{N_2},\tfrac{j_3-1/2}{3},\tfrac{k-1/2}{P})\right)_{j_1=1,j_2=1,j_3=1,k=1}^{N_1-1,N_2,3,P}\in \mathbb R^{N_1-1,N_2,3,P},\\[1.5ex]
	&\left(m_2(\tfrac{j_1-1/2}{N_1},\tfrac{j_2}{N_2},\tfrac{j_3-1/2}{3},\tfrac{k-1/2}{P})\right)_{j_1=1,j_2=1,j_3=1,k=1}^{N_1,N_2-1,3,P}\in \mathbb R^{N_1,N_2-1,3,P},\\[1.5ex]
	&\left(m_3(\tfrac{j_1-1/2}{N_1},\tfrac{j_2-1/2}{N_2},\tfrac{j_3}{3},\tfrac{k-1/2}{P})\right)_{j_1=1,j_2=1,j_3=0,k=1}^{N_1,N_2,2,P}\in \mathbb R^{N_1,N_2,3,P}.
\end{align*}

\end{itemize}
In the definition of $m$ we take the periodic boundary  for the third spatial 
direction into account. Analogously as in the one-dimensional case, when 
reshaping $m$ and $f$ into long vectors, the interpolation and differentiation 
operators can be written using tensor products. For the interpolation operator 
we have
\begin{align*}
S_{\text{m}}m &=	\begin{pmatrix}
	(I_P \otimes I_3\otimes I_{N_2} \otimes S_{N_1}^\tT) m_1\\
	(I_P \otimes I_3\otimes S_{N_2}^\tT \otimes I_{N_1}) m_2\\
	(I_P \otimes S_3^\tT\otimes I_{N_2} \otimes I_{N_1}) m_3\end{pmatrix}
\end{align*}
and
\begin{align*}
S_{\text{f}}f =
	(S_P^\tT \otimes I_3\otimes I_{N_2} \otimes I_{N_1}) f,
\end{align*}
which means, that $S_{\text{m}}m$ computes the average of $m_i$ with respect to the $i$-th coordinate, $i=1,2,3$, and $S_{\text{f}}f$ computes the average of $f$ with respect to the time variable. 
Similarly we generalize the difference operator.
Then, reordering $f$ and $m$ into large vectors, the matrix form of the operator $A$ is
\begin{multline}
	A = 
	\big( 
	I_P\otimes I_3\otimes I_{N_2}\otimes D_{N_1}^\tT \, | \, I_P\otimes I_3\otimes D_{N_2}^\tT\otimes I_{N_1}\, | \,\\
	I_P\otimes (D_3^{\per})^\tT\otimes I_{N_2}\otimes I_{N_1} \, | \, D_P\otimes I_3\otimes I_{N_2}\otimes I_{N_1} 
	\big)\label{diff_operator}
\end{multline}
so that $AA^\tT$ reads as 
\begin{align*}
A A^\tT	 
&= I_P\otimes I_3\otimes I_{N_2}\otimes D_{N_1}^\tT D_{N_1}+I_P\otimes I_3\otimes D_{N_2}^\tT D_{N_2}\otimes I_{N_1}\\
& \ \ \ \ +I_P\otimes (D_3^{\per})^\tT(D_3^{\per})\otimes I_{N_2}\otimes I_{N_1} +D_P^\tT D_P\otimes I_3\otimes I_{N_2}\otimes I_{N_1}\\
&= \left(C^\tT_P \otimes  \bar{F_3} \otimes C_{N_2-1}^\tT\otimes C_{N_1-1}^\tT\right)\diag(\dd)\left(C_P \otimes  F_3 \otimes C_{N_2-1}\otimes C_{N_1-1}\right),
\end{align*}
where 
\begin{align*}
	\dd &\coloneqq I_{3 P (N_2-1)} \otimes N_1^2 
	\diag(\dd_{N_1-1}^{\text{mirr}}) + I_{3P}\otimes {N_2}^2 \diag(\dd_{{N_2}-1}^{\text{mirr}})\otimes 
	 I_{N_1-1}\\ 
	 & \ \ \ \ + I_P \otimes 3^2 \diag(\dd_{3}^{\text{per}}) \otimes I_{(N_2-1)(N_1-1)}+ P^2 \diag(\dd_{P}^{\text{mirr}})\otimes I_{3P(N_2-1)(N_1-1)}.
\end{align*}
For the three-dimensional spatial setting we have to solve a four-dimensional Poisson equation, which can be handled separately in each dimension.
For the constrained problem, this can be computed using fast cosine and Fourier transforms with a complexity of $\mathcal{O}(N_1 N_2 P\log(N_1 N_2 P))$.
\bibliography{OT}
\bibliographystyle{abbrv}

\end{document}